\theoremstyle{plain}
\newtheorem{theorem}{Theorem}
\newtheorem{lemma}{Lemma}[section]
\newtheorem{proposition}[lemma]{Proposition}
\newtheorem{notation}[lemma]{Notation}
\newtheorem{claim}[lemma]{Claim}
\theoremstyle{definition}
\newtheorem{definition}[lemma]{Definition}
\DeclareMathOperator{\rng}{rng}
\begin{document}

\title[Degrees of primitive recursive $m$-reducibility]{A note on the degree structure of primitive recursive $m$-reducibility}

\author[B. S. Kalmurzayev]{Birzhan S. Kalmurzayev}
\address{Kazakh-British Technical University, 59 Tole Bi Street, Almaty, 050000, Kazakhstan}
\address{Al-Farabi Kazakh National University, 71 al Farabi Avenue, Almaty, 050040, Kazakhstan}
\email{birzhan.kalmurzayev@gmail.com}

\author[N. A. Bazhenov]{Nikolay A. Bazhenov}
\address{Kazakh-British Technical University, 59 Tole Bi Street, Almaty, 050000, Kazakhstan}
\email{n.bazhenov.1@gmail.com}

\author[A.M. Iskakov]{Alibek M. Iskakov}
\address{Kazakh-British Technical University, 59 Tole Bi Street, Almaty, 050000, Kazakhstan}
\address{Al-Farabi Kazakh National University, 71 al Farabi Avenue, Almaty, 050040, Kazakhstan}
\email{bheadr73@gmail.com}

\date{\today}

\begin{abstract}
	Let $\mathbf{C}^{pr}_m$ be the upper semilattice of degrees of computable sets with respect to primitive recursive $m$-reducibility. We prove that the first-order theory of $\mathbf{C}^{pr}_m$ is hereditarily undecidable.
\end{abstract}

\keywords{Primitive recursive function, $m$-reducibility, first-order theory, undecidability}

\maketitle

\section{Introduction}

Let $A$ and $B$ be subsets of natural numbers. We say that $A$ is \emph{primitively recursively $m$-reducible} to $B$ (or $pr$-$m$-reducible, for short, and denoted $A \leq^{pr}_m B$) if there exists a primitive recursive function $f(x)$ such that 
\[
		\forall x (x \in A\ \Leftrightarrow\ f(x) \in B).
\]
By $\mathbf{C}^{pr}_{m}$ we denote the upper semilattice of all computable degrees of sets under primitive recursive $m$-reducibility. As usual, we do not include the degrees $\deg^{pr}_m(\emptyset)$ and $\deg^{pr}_m(\omega)$ into $\mathbf{C}^{pr}_{m}$. 

For the background on computability theory, the reader is referred to \cite{Odi-92}.
Notice that a standard proof (see, e.g., Proposition VI.1.8 in \cite{Odi-92}) gives the following:
\begin{lemma}
	The upper semilattice $\mathbf{C}^{pr}_{m}$ is distributive.
\end{lemma}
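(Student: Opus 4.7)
The plan is to lift the standard distributivity argument for the classical $m$-degrees (cf.\ Odifreddi, Proposition VI.1.8) to the primitive recursive setting, observing that every reduction built along the way is defined from the given reduction by parity tests, halving, case distinctions, and constants, all of which preserve primitive recursiveness.

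Concretely, I would take an arbitrary instance $\mathbf{a} \leq \mathbf{b} \vee \mathbf{c}$ in $\mathbf{C}^{pr}_m$, witnessed by computable sets $A, B, C$ and a primitive recursive $f$ with $A \leq^{pr}_m B \oplus C$ via $f$, and split $A$ according to the parity of $f$:
\[
D_1 := \{x \in A : f(x) \text{ is even}\}, \qquad D_2 := \{x \in A : f(x) \text{ is odd}\}.
\]
Both are computable. Next, fix constants $a_0 \notin A$, $b_0 \notin B$, $c_0 \notin C$, which exist because $\deg^{pr}_m(\omega)$ is excluded from $\mathbf{C}^{pr}_m$. Writing everything by cases on the parity of $f(x)$, I would produce primitive recursive functions witnessing
\[
D_1 \leq^{pr}_m B, \quad D_2 \leq^{pr}_m C, \quad A \leq^{pr}_m D_1 \oplus D_2, \quad D_1 \oplus D_2 \leq^{pr}_m A;
\]
for instance, $g_1(x) := f(x)/2$ when $f(x)$ is even and $g_1(x) := b_0$ otherwise reduces $D_1$ to $B$, while $h(x) := 2x$ or $2x+1$ depending on the parity of $f(x)$ reduces $A$ to $D_1 \oplus D_2$, and $a_0$ is used to handle the wrong-parity branches in the reverse reduction $D_1 \oplus D_2 \leq^{pr}_m A$. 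All pieces are primitive recursive. Combining these reductions yields $\mathbf{a} = \deg^{pr}_m(D_1) \vee \deg^{pr}_m(D_2)$ with $\deg^{pr}_m(D_1) \leq \mathbf{b}$ and $\deg^{pr}_m(D_2) \leq \mathbf{c}$, the required distributive decomposition.

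The one subtlety, and the main obstacle to a completely mechanical proof, is the degenerate possibility that $D_1$ or $D_2$ is empty (note that $D_i \subseteq A \neq \omega$ rules out the other extreme), in which case its degree equals the excluded $\deg^{pr}_m(\emptyset)$ and falls outside $\mathbf{C}^{pr}_m$. In that edge case $A$ is already primitively recursively $m$-reducible to $B$ or to $C$ on the nose, and in the join decomposition one may replace the missing summand by the minimum element $\deg^{pr}_m(\{0\})$ of $\mathbf{C}^{pr}_m$; this minimum lies below every degree because for any nontrivial computable $S$ the primitive recursive map $0 \mapsto s_1$, $n \mapsto s_0$ (with $s_1 \in S$, $s_0 \notin S$) witnesses $\{0\} \leq^{pr}_m S$.
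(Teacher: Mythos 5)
Your proposal is correct and is essentially the proof the paper has in mind: the paper simply invokes the standard argument of Proposition VI.1.8 in Odifreddi, which is exactly your parity-splitting of $A$ along $f$, and you rightly note that all auxiliary reductions are primitive recursive. Your handling of the degenerate case $D_i=\emptyset$ (replacing the missing summand by the least degree $\deg^{pr}_m(\{0\})$) is the appropriate fix given that $\deg^{pr}_m(\emptyset)$ is excluded from $\mathbf{C}^{pr}_m$.
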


\medskip

Following the outline from \cite{DN-2000}, this note proves the following main result:
\begin{theorem}\label{theo:undecidable}
	The first-order theory of $\mathbf{C}^{pr}_{m}$ is hereditarily undecidable.
\end{theorem}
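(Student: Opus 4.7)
The plan is to prove Theorem~\ref{theo:undecidable} by exhibiting a parameter-definable first-order interpretation of a known hereditarily undecidable theory into $\mathrm{Th}(\mathbf{C}^{pr}_m)$. By the standard transfer principle, hereditary undecidability then passes from the interpreted theory up to $\mathrm{Th}(\mathbf{C}^{pr}_m)$. Following the approach of \cite{DN-2000}, the natural source will be a class of finite distributive lattices, possibly enriched with a small amount of extra structure such as a distinguished subset or element, whose first-order theory is known to be hereditarily undecidable.

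The central technical step is a coding lemma: for every finite distributive lattice $L$ from the source class, one constructs computable sets $\{A_l : l \in L\}$ so that the assignment $l \mapsto \deg^{pr}_m(A_l)$ is an embedding of $L$ into $\mathbf{C}^{pr}_m$ realizing $L$ precisely, in the sense that $A_l \leq^{pr}_m A_{l'} \Leftrightarrow l \leq_L l'$, and so that the image of the embedding is uniformly first-order definable from a tuple $\bar{p}$ of parameters in $\mathbf{C}^{pr}_m$. Distributivity of $\mathbf{C}^{pr}_m$ (the preceding lemma) is essential here, since it ensures that joins behave as expected and parallels the classical constructions realizing finite distributive lattices as initial segments in other degree structures. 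With this coding in hand, the first-order interpretation is assembled from definable formulas $\varphi_{\mathrm{dom}}(x,\bar{p})$, $\varphi_{\leq}(x,y,\bar{p})$, etc., describing the embedded lattice, and hereditary undecidability transfers as usual.

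The main obstacle, I expect, is the construction of the sets $A_l$ with \emph{reductions} between them that are primitive recursive rather than merely computable. Standard initial-segment arguments for the Turing degrees or the $m$-degrees of c.e.\ sets rely on unbounded search and freely available recursion, neither of which translates directly to the primitive recursive world. One must instead control growth rates explicitly throughout the construction and verify that no unintended $pr$-$m$-reduction, that is, a primitive recursive function of some unforeseen complexity, accidentally arises between coded sets. Once these combinatorial constructions and their verifications are in place, the final assembly of the interpretation and the application of the transfer principle are routine.
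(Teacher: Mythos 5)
There is a genuine gap, and your route also diverges from the paper's. The paper does not code finite lattices at all: it fixes a single \emph{super $pr$-sparse} computable set $A$ with degree $\mathbf{a}$, proves that the set $\mathcal{B}(\mathbf{a})$ of complemented elements in the cone below $\mathbf{a}$ is an effectively dense $\Sigma^0_2$-Boolean algebra, and then shows (the main lemma) that every $\Sigma^0_2$-ideal $I$ of $\mathcal{B}(\mathbf{a})$ has the form $\{\mathbf{x}\in\mathcal{B}(\mathbf{a}) : \mathbf{x}\leq\mathbf{c}_I\}$ for some $\mathbf{c}_I\leq\mathbf{a}$. This yields a first-order interpretation, with the \emph{single} parameter $\mathbf{a}$, of the infinite ideal lattice $\mathcal{I}(\mathcal{B}(\mathbf{a}))$ inside $\mathbf{C}^{pr}_m$, and hereditary undecidability then comes from Nies's theorem that $Th(\mathcal{I}(\mathcal{B}))$ is hereditarily undecidable for any effectively dense $\Sigma^0_2$-Boolean algebra. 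All of the primitive recursive work (controlling growth rates via condition (d) of super $pr$-sparseness, the coding locations, the time-bound functions $r_e$) is in service of these two facts, not of realizing finite lattices.

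The concrete gap in your proposal is the uniformity of the interpretation. To transfer hereditary undecidability from a class $K$ of finite structures via \cite{BurSan-75} or \cite{ELTT-65}, you need a \emph{fixed} tuple of formulas $\varphi_{\mathrm{dom}}(x,\bar p)$, $\varphi_{\leq}(x,y,\bar p)$ with a \emph{fixed} number of parameter variables that, as $\bar p$ ranges over $\mathbf{C}^{pr}_m$, produces (up to isomorphism) all members of $K$ while staying inside a class with the same hereditarily undecidable theory. Merely embedding each finite distributive lattice $L$ as a set of degrees $\{\deg^{pr}_m(A_l)\}$ does not give this: the obvious way to make the image definable is to take the degrees themselves as parameters, but then the arity of $\bar p$ grows with $|L|$ and you no longer have a single interpretation scheme. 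Your proposal offers no mechanism for defining arbitrarily large finite configurations from a bounded parameter tuple; this is precisely the problem that the ideal-lattice approach solves (one parameter $\mathbf{a}$ definably carves out the whole of $\mathcal{I}(\mathcal{B}(\mathbf{a}))$ at once). In addition, you would need to verify that your chosen source class really has hereditarily undecidable theory (the class of finite distributive lattices requires an argument, e.g.\ via interpreting finite posets on the join-irreducibles; ``possibly enriched with extra structure'' leaves this unsettled), and the entire combinatorial content --- constructing the sets $A_l$ with exactly the intended $pr$-$m$-reductions and excluding unintended primitive recursive reductions --- is acknowledged but not supplied. As it stands, the proposal is a plan whose essential steps are all deferred.
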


The proof consists of three sections. Note that here we tried to make our exposition as \emph{self-contained} as possible.

Section~\ref{sect:schema} briefly describes the general proof scheme. In Section~\ref{sect:super-sparse}, we define an appropriate modification of the notion of a \emph{super sparse} set taken from~\cite{Ambos-Spies-86,DN-2000}. We also establish some useful properties of the modified notion.

In the last section, we prove our main lemma (Lemma~\ref{lem:main-BA}) which allows us to interpret (with parameters) an appropriate structure $\mathcal{I} (\mathcal{B})$, having hereditarily undecidable theory, inside the semilattice $\mathbf{C}^{pr}_m$.

\subsection{Some preliminary facts}\label{subsect:prelim}

We use the list $(p_e)_{e\in\omega}$ which effectively (but not primitively recursively) lists all unary primitive recursive functions. We can also effectively list the primitive recursive time-bound functions $r_e(x) \geq p_e(x)$, where $r_e(x)$ gives the number of computational steps needed for $p_e(x)$  to converge. We note that for any partial computable function $\varphi_e(x)$, including the primitive recursive functions $p_{e}$, the stage $s$ approximation $\varphi_{e,s}(x)$ is primitive recursive (in $e$, $s$, and $x$).

In the constructions of some computable sets, we will use the following
\begin{lemma}[folklore]\label{lem:Graph}
	Let $f(x)$ be a strictly increasing, total computable function such that the graph of $f$ is a primitive recursive set. Then there is a primitive recursive procedure which given an element $y\in \omega$, computes the following:
	\begin{enumerate}
		\item whether $y$ belongs to the set $\rng(f)$;
		\item if $y\in \rng(f)$, then the procedure finds the number $x$ such that $f(x)=y$.
	\end{enumerate}
\end{lemma}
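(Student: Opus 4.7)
The plan is to exploit the monotonicity of $f$ to turn the unbounded search for a preimage into a bounded one, after which everything reduces to bounded quantification over the primitive recursive graph.

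First I would verify the elementary observation that $f(x) \geq x$ for every $x \in \omega$. This is a quick induction: $f(0) \geq 0$ trivially, and if $f(x) \geq x$ then $f(x+1) > f(x) \geq x$, so $f(x+1) \geq x+1$. This is the only place the strictly increasing hypothesis is used, and it is what makes the whole argument go through.

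Next, since $f(x) \geq x$, for any $y \in \omega$ we have $y \in \rng(f)$ if and only if there exists $x \leq y$ such that $f(x) = y$, i.e., such that $(x,y)$ lies in the primitive recursive graph of $f$. Because the predicate ``$(x,y) \in \mathrm{graph}(f)$'' is primitive recursive in $x$ and $y$, the bounded existential quantifier $\exists x \leq y$ yields a primitive recursive characteristic function for $\rng(f)$. This handles item~(1).

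For item~(2), I would use the bounded minimization operator, which preserves primitive recursiveness: if $y \in \rng(f)$, then $x := \mu x \leq y\, [(x,y) \in \mathrm{graph}(f)]$ is computed primitive recursively from $y$ and returns the unique preimage (uniqueness following from strict monotonicity). The only subtlety worth flagging is keeping the bounded search honest — bounded $\mu$ on primitive recursive predicates is primitive recursive, which is standard — so there is no real obstacle. In sum, the entire lemma rests on the inequality $f(x) \geq x$, and everything else is a routine invocation of closure of primitive recursive functions under bounded quantification and bounded search.
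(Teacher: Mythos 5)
Your proposal is correct and follows essentially the same route as the paper: both arguments use strict monotonicity to bound the preimage search by $y$ (the paper notes $f(y+1)>y$, you derive $f(x)\geq x$ — the same fact) and then apply bounded search over the primitive recursive graph. Nothing to add.
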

\begin{proof}
	Since $f$ is strictly increasing, it is clear that $f(y+1) \geq y+1 > y$. Thus, in order to check whether $y\in \rng(f)$, it is sufficient to search for the minimal $x \leq y$ such that $(x,y) \in \mathrm{graph}(f)$. If such an $x$ exists, then $y= f(x)$. Otherwise, $y\not\in \rng(f)$. Observe that the described procedure is primitive recursive.
\end{proof}

%\medskip

\section{The outline of the proof}\label{sect:schema}

For the preliminaries on countable Boolean algebras, we refer to, e.g., the monograph~\cite{Goncharov-Book}.

Let $n\geq 1$. Then a structure $\mathcal{B} = (\omega; \vee, \wedge, \overline{(\cdot)}; \approx)$ is called a \emph{$\Sigma^0_n$-Boolean algebra}, if:
\begin{itemize}
	\item the functions $\vee,\wedge,\overline{(\cdot)}$ are computable;
	
	\item the binary relation $\approx$ belongs to $\Sigma^0_n$;
	
	\item $\approx$ is a congruence of the structure $\mathcal{B}^{\#} = (\omega; \vee, \wedge, \overline{(\cdot)})$;
	
	\item the quotient structure $\mathcal{B}^{\#}/\!\approx$ is Boolean algebra
\end{itemize}
The (natural) preorder relation $\preceq$ on a $\Sigma^0_n$-Boolean algebra $\mathcal{B}$ is introduced as follows: $x\preceq y$ if and only if $x\vee y \approx y$. It is not hard to check that $(x\approx y)$ holds if and only if $(x \preceq y)$ and $(y \preceq x)$.

By $0_{\mathcal{B}}$ we denote the least element in the algebra $\mathcal{B}$ (here we identify the element $0_{\mathcal{B}}$ from $\omega$ and its image $[0_{\mathcal{B}}]_{\approx}$ in the quotient algebra $\mathcal{B}^ {\#}/\!\approx$).

\medskip

\begin{definition}[\cite{Nies-97}]
	A $\Sigma^0_n$-Boolean algebra $\mathcal{B}$ is called \emph{effectively dense} if there is total $\Delta^0_n$-computable function $F(x)$ such that
	\[
	\forall x [ x\not\approx 0_{\mathcal{B}}\ \rightarrow\  0_{\mathcal{B}} \prec F(x) \prec x].
	\]
\end{definition}

Let $\mathcal{C} = (C; \vee, \wedge, \overline{(\cdot)})$ be a Boolean algebra. Recall that a non-empty set $I\subseteq C$ is  an \emph{ideal} of the algebra $\mathcal{C}$ if:
\begin{enumerate}
	\item $(\forall x,y\in I)[x\vee y \in I]$;
	
	\item if $x\leq_{\mathcal{C}} y$ and $y\in I$, then $x\in I$.
\end{enumerate}

Let $\mathcal{B}$ be a $\Sigma^0_n$-Boolean algebra. A set $J\subseteq \omega$ is called a \emph{$\Sigma^0_n$-ideal} of the algebra $\mathcal{B}$ if $J\in \Sigma^0_n$ and $J = \{ x : [x]_{\approx} \in I\}$ for some ideal $I$ in the Boolean algebra $\mathcal{B}^{\#}/\!\approx$.
It is known that the set of all $\Sigma^0_n$-ideals of $\mathcal{B}$ forms a lattice under the following operations:
\begin{itemize}
	\item the set-theoretic intersection $J\cap K$,
	
	\item the operation $J \vee K = \{ x\vee y : x\in J,\ y \in K\}$.
\end{itemize}
This lattice is distributive.

\begin{notation}
	For a $\Sigma^0_n$-Boolean algebra $\mathcal{B}$, by $\mathcal{I}(\mathcal{B})$ we denote the lattice of all $\Sigma^0_n$-ideals of $\mathcal{B}$.
\end{notation}

The proof of Theorem~\ref{theo:undecidable} will use the following result:

\begin{theorem}[Nies~\cite{Nies-97}]\label{theo:Nies}
	Let $\mathcal{B}$ be an effectively dense $\Sigma^0_n$-Boolean algebra. Then the theory $Th(\mathcal{I}(\mathcal{B}))$ is hereditarily undecidable.\footnote{Note that Nies~\cite{Nies-00} proved that $Th(\mathcal{I}(\mathcal{B})) \equiv_m Th(\mathbb{N},+,\times)$.}
\end{theorem}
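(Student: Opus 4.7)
The plan is to prove hereditary undecidability of $Th(\mathcal{I}(\mathcal{B}))$ by a parametric first-order interpretation inside $\mathcal{I}(\mathcal{B})$ of a class of finite structures whose theory is already known to be hereditarily undecidable, for instance the class of finite symmetric irreflexive binary relations (finite graphs). Hereditary undecidability transfers along parametric interpretations by the standard Ershov--Tarski--Mostowski--Robinson transfer theorem, so it suffices to produce such an interpretation.

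The first step is to use the $\Delta^0_n$ splitting function $F$ witnessing effective density of $\mathcal{B}$ to construct an infinite independent sequence $e_0, e_1, e_2, \ldots$ of pairwise disjoint nonzero elements of $\mathcal{B}$. Starting from any $e \not\approx 0_{\mathcal{B}}$, effective density yields $0_{\mathcal{B}} \prec F(e) \prec e$, so $F(e)$ and $e \wedge \overline{F(e)}$ split $e$ into two nonzero disjoint pieces; iterating the split inside each piece delivers the sequence. Because $F$ is $\Delta^0_n$, the principal ideals $(e_i] = \{ x : x \preceq e_i\}$ together with all $\Sigma^0_n$-ideals generated by finite or infinite subcollections of these are members of $\mathcal{I}(\mathcal{B})$, and they form an independent family in the distributive lattice. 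This family serves as the scaffolding for the interpretation.

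Next I would fix a tuple of parameters $\bar{p}$ in $\mathcal{I}(\mathcal{B})$ encoding a \emph{vertex region} and an \emph{edge indicator region}, each constructed from initial pieces of the independent sequence. A vertex predicate $V(x)$ is then a parameter-definable formula picking out $\Sigma^0_n$-ideals that are minimal nonzero below the vertex region relative to the scaffolding (a quasi-atomicity condition expressible by meet and join alone), while an edge relation $E(x,y)$ is a lattice-theoretic condition linking $x \vee y$ to the edge indicator region. Given any finite graph $G$, effective density permits a choice of $\bar{p}$ realizing exactly the adjacency pattern of $G$: the splitting function $F$ can be iterated to refine the scaffolding until precisely the required incidences are produced, and every finite $G$ is obtained in this way.

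The main obstacles are twofold. First, the entire construction has to stay within $\Sigma^0_n$ so that all coding ideals actually belong to $\mathcal{I}(\mathcal{B})$; this is delivered by the $\Delta^0_n$ nature of $F$ together with the closure of $\mathcal{I}(\mathcal{B})$ under $\cap$ and $\vee$, but must be checked uniformly. Second, and more delicate, is the \emph{faithfulness} of the interpretation: one must show that no spurious $\Sigma^0_n$-ideals lying outside the intended scaffolding can satisfy the defining formulas of $V$ and $E$, which forces the parameter scheme to be chosen so that distributivity of $\mathcal{I}(\mathcal{B})$ cleanly separates the intended vertices and excludes unintended collapses. Once these two verifications go through, the parametric interpretation of finite graphs in $\mathcal{I}(\mathcal{B})$ is established, and hereditary undecidability of $Th(\mathcal{I}(\mathcal{B}))$ follows from the corresponding property of the theory of finite graphs by the standard transfer.
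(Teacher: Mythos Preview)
The paper does not prove this theorem; it is cited from Nies~\cite{Nies-97} (with the stronger $\equiv_m$ version noted from~\cite{Nies-00}) and invoked as a black box in the proof of Theorem~\ref{theo:undecidable}. There is therefore no proof in the paper to compare your proposal against.

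Regarding the proposal on its own merits: the high-level strategy---use the splitting function $F$ to build an infinite pairwise-disjoint family and then parametrically interpret finite graphs---is in the spirit of the coding arguments Nies employs. But there is a genuine gap at exactly the point you flag as ``more delicate''. You want vertices to be ideals that are ``minimal nonzero below the vertex region relative to the scaffolding (a quasi-atomicity condition)'', yet effective density forces $\mathcal{B}$ to be atomless, so every nonzero principal ideal $(e_i]$ properly contains infinitely many smaller nonzero $\Sigma^0_n$-ideals; there is nothing minimal to pick out. The qualifier ``relative to the scaffolding'' does not rescue this, because the scaffolding $\{(e_i]\}_i$ is a family you constructed externally, not a set definable in the lattice language from your parameters. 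To get faithfulness one needs an additional parameter ideal chosen so that the intended $(e_i]$ are singled out by a relative-complementation or interval condition rather than by minimality---and this is precisely where the technical content of Nies's argument resides. Your plan names the obstacle but supplies no mechanism to overcome it, so as written it is an outline that stops short of the key step.
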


\medskip

The proof of Theorem~\ref{theo:undecidable} consists of two main stages.

Let $\mathbf{a}$ be the $pr$-$m$-degree of a computable, non-primitive recursive set $A$. We define the set $\mathcal{B}(\mathbf{a})$ of all complemented elements in the lower cone of $\mathbf{a}$:
\begin{equation}\label{equ:B(a)}
	\mathcal{B}(\mathbf{a})  = \{ \mathbf{b} \in \mathbf{C}^{pr}_m : \exists \mathbf{c} (\mathbf{b} \wedge \mathbf{c} = \mathbf{0} \ \&\   \mathbf{b} \vee \mathbf{c} = \mathbf{a})\}.
\end{equation}

The first stage (Section~\ref{sect:super-sparse}) establishes that for some appropriate sets $A$, the structure $\mathcal{B}(\mathbf{a})$ is an effectively dense $\Sigma^0_2$-Boolean algebra. 

The second stage (Section~\ref{sect:lemma-finish}) shows that the constructed set $A$ also allows us to interpret (with one parameter $\mathbf{a}$) the lattice $\mathcal{I}(\mathcal{B}(\mathbf{a}))$ inside the structure $\mathbf{C}^{pr}_m$. This fact and Theorem~\ref{theo:Nies} together imply (see, e.g., \cite{ELTT-65,BurSan-75}) that the theory $Th(\mathbf{C}^{pr}_m)$ is hereditarily undecidable.

\medskip

\section{Super pr-sparse sets and their properties}\label{sect:super-sparse}

The notion of a super sparse set (introduced in~\cite{Ambos-Spies-86}) plays an important role in the proofs of~\cite{DN-2000}. Here we give a modification of this notion, which is suitable for our primitive recursive setting.

\begin{definition}\label{def:super-sparse}
	Let $A\subseteq \omega$ be a computable set, and let $f(x)$ be a strictly increasing computable function. We say that the set $A$ is \emph{super $pr$-sparse via $f$} if:
	\begin{itemize}
		\item[(a)] the set $A$ is not primitive recursive;
		
		\item[(b)] the graph $\Gamma_f$ of the function $f$ is primitive recursive;
		
		\item[(c)] $A\subseteq \mathrm{range}(f)$ and the value $\chi_A(f(x))$ can be computed in at most $f(x+1)$ steps;
		
		\item[(d)] for each primitive recursive function $p_i(x)$, there is $n_i\in\omega$ such that 
		\[
			(\forall n \geq n_i)(f(n+1) > p_i(f(n))).
		\]
	\end{itemize}
\end{definition}

The main result of this section is the following: 
\begin{proposition}\label{prop:eff-dense-BA}
	If $A$ is a super $pr$-sparse set, then the structure $\mathcal{B}(\mathbf{a})$ from Eq.~(\ref{equ:B(a)}) is an effectively dense $\Sigma^0_2$-Boolean algebra.
\end{proposition}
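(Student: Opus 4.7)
The plan is to identify $\mathcal{B}(\mathbf{a})$ with the Boolean algebra of primitive recursive \emph{splittings} of $A$ modulo $\equiv^{pr}_m$. To each primitive recursive $P \subseteq \omega$ associate the degree $\deg^{pr}_m(A \cap P)$; the Boolean operations on such degrees correspond to $\cup, \cap$, and complement of primitive recursive predicates (primitive recursive on indices), and the induced equivalence is $\Sigma^0_2$ since $\equiv^{pr}_m$ is. This gives the required $\Sigma^0_2$ presentation once we establish (i) every $\deg^{pr}_m(A \cap P)$ lies in $\mathcal{B}(\mathbf{a})$, and (ii) every element of $\mathcal{B}(\mathbf{a})$ has this form.

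The crux of (i) is a splitting lemma: if $D \leq^{pr}_m A \cap P$ via $g$ and $D \leq^{pr}_m A \setminus P$ via $h$, then $D$ is primitive recursive. To decide $x \in D$ primitively recursively, compute $g(x), h(x)$ and apply $\chi_P$ to both; in the only nontrivial case $g(x) \in P$ and $h(x) \notin P$, so $g(x) \neq h(x)$. Using Lemma~\ref{lem:Graph} we primitively recursively test whether $g(x), h(x) \in \mathrm{rng}(f)$ and, if so, extract $g(x) = f(n)$, $h(x) = f(m)$ with $n \neq m$. Without loss of generality $n < m$, so $f(n+1) \leq f(m) = h(x)$; by clause (c) of Definition~\ref{def:super-sparse}, $\chi_A(f(n))$ can be computed in $h(x)$ steps---primitive recursive in $x$. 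Together with routine reductions showing that $A \cap P$ and $A \setminus P$ join to $A$ in $\mathbf{C}^{pr}_m$, this yields (i). For (ii), given a complementary pair $(\mathbf{b}, \mathbf{c})$ witnessed by a primitive recursive function $\alpha$ that reduces $A$ to $B \oplus C$, set $P_\alpha := \{x : \alpha(x) \text{ is even}\}$; direct reductions yield $\deg^{pr}_m(A \cap P_\alpha) \leq \mathbf{b}$ and $\deg^{pr}_m(A \setminus P_\alpha) \leq \mathbf{c}$. The distributivity of $\mathbf{C}^{pr}_m$ applied to $\mathbf{b} \leq \deg^{pr}_m(A \cap P_\alpha) \vee \deg^{pr}_m(A \setminus P_\alpha) = \mathbf{a}$ decomposes $\mathbf{b} = \mathbf{b}_1 \vee \mathbf{b}_2$ with $\mathbf{b}_1 \leq \deg^{pr}_m(A \cap P_\alpha) \leq \mathbf{b}$ and $\mathbf{b}_2 \leq \deg^{pr}_m(A \setminus P_\alpha) \leq \mathbf{c}$; then $\mathbf{b}_2 \leq \mathbf{b} \wedge \mathbf{c} = \mathbf{0}$, forcing $\mathbf{b} = \deg^{pr}_m(A \cap P_\alpha)$.

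For effective density, given $e$ with $A \cap P_e$ non-primitive-recursive, we must $\Delta^0_2$-compute $e'$ coding a primitive recursive $P_{e'} \subseteq P_e$ such that both $A \cap P_{e'}$ and $A \cap (P_e \setminus P_{e'})$ are non-primitive-recursive. Once this holds, standard Boolean-algebra reasoning places $\deg^{pr}_m(A \cap P_{e'})$ strictly between $\mathbf{0}$ and $\deg^{pr}_m(A \cap P_e)$. A natural starting point is the ``every other $f$-index'' split: $x = f(n) \in P_{e'}$ iff $|\{n' < n : f(n') \in P_e\}|$ is even, which is primitive recursive via Lemma~\ref{lem:Graph}. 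The main obstacle is that one half of this naive split may coincidentally be primitive recursive, collapsing the other back to $\deg^{pr}_m(A \cap P_e)$ in the Boolean algebra. Overcoming this is the essential use of clause (d) of Definition~\ref{def:super-sparse}: since $f(n+1)$ eventually dominates $p_i(f(n))$ for every primitive recursive $p_i$, each $p_i$'s behavior on a given $f$-level is ``committed'' long before subsequent levels are processed. Using $\emptyset'$ to detect when a candidate $P_{e'}$ would be captured by some $p_i$, one can drive a finite-injury-style diagonalization that perturbs $P_{e'}$ at a sufficiently high $f$-level to defeat each $p_i$ while preserving $P_{e'} \subseteq P_e$. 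I expect this diagonalization, following the scheme of~\cite{DN-2000}, to be the technical heart of the proof.
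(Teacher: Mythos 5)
Your first half---identifying $\mathcal{B}(\mathbf{a})$ with the algebra of degrees $\deg^{pr}_m(A\cap P)$ for primitive recursive $P$---is essentially the paper's Lemma~\ref{lem:BA-01} and is correct. Your splitting lemma (extract $f$-indices via Lemma~\ref{lem:Graph}, use the larger of $g(x),h(x)$ as a step-bound for clause~(c)) is exactly the paper's argument for $\mathbf{b}\wedge\mathbf{c}=\mathbf{0}$, and your surjectivity argument via $P_\alpha=\{x:\alpha(x)\text{ even}\}$ plus distributivity is the paper's argument for part~(i). One presentational difference: you take the congruence to be $\equiv^{pr}_m$ itself (which is indeed $\Sigma^0_2$), whereas the paper proves the cleaner characterization $(A\cap X)\leq^{pr}_m(A\cap Y)$ iff $A\cap(X\setminus Y)$ is primitive recursive (Lemma~\ref{lem:BA-01}(iii)); you will need something like that characterization anyway to verify that $P\mapsto\deg^{pr}_m(A\cap P)$ is a lattice homomorphism (e.g.\ that $\deg^{pr}_m(A\cap P)\wedge\deg^{pr}_m(A\cap Q)=\deg^{pr}_m(A\cap P\cap Q)$), and later to convert ``both halves non-primitive-recursive'' into strict inequalities of degrees.

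The effective-density half has a genuine gap. You correctly identify the goal (produce $P_{e'}\subseteq P_e$ with both $A\cap P_{e'}$ and $A\cap(P_e\setminus P_{e'})$ non-primitive-recursive, with $e'$ computed from $e$ using $\mathbf{0}'$), but the construction is only gestured at, and the gesture misses the actual difficulty. The hard point is not defeating each $p_i$---it is that the set $Y=P_{e'}$ you build must itself be \emph{primitive recursive}, even though the decisions about where $Y$ switches behavior are made by unboundedly long searches (waiting for a witness $y$ with $\chi_{A\cap X}(y)\neq p_i(y)$, a search that may never terminate if $A\cap X$ happens to be primitive recursive). A ``perturb $P_{e'}$ at a sufficiently high $f$-level'' scheme driven by a $\mathbf{0}'$-search does not obviously yield a primitive recursive $Y$, and you do not address this. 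The paper's device is to have $Y$ \emph{promptly} copy $X$ (or promptly output $0$) on each successive input while the diagonalization searches run slowly in the background, switching mode only when a witness appears; this makes $\chi_Y$ primitive recursive by construction and simultaneously yields the reduction $h\colon A\cap Y\leq^{pr}_m A\cap X$. Relatedly, your claim that clause~(d) of Definition~\ref{def:super-sparse} is ``the essential use'' here is off the mark: the paper explicitly notes that the proof of this proposition never uses item~(d) (it is needed only for the main lemma in the final section). Clause~(c) and the primitive recursiveness of $\Gamma_f$ carry the whole proposition; the density argument needs only the prompt-copy/slow-search trick plus Lemma~\ref{lem:BA-01}(iii).
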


\medskip

\subsection{Existence of super pr-sparse sets} 
Before proving Proposition~\ref{prop:eff-dense-BA}, we establish the following necessary result.

\begin{lemma}\label{lem:super-sparse-exist}
	There exists a super $pr$-sparse set.
\end{lemma}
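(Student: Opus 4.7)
The plan is to construct $f$ and $A$ by a simultaneous fast-growing diagonal recursion, in which $f$ will be large enough to serve as its own time bound: on a suitably chosen Turing machine $M_f$, the total cost of computing $f(n)$ from scratch will be at most $f(n)$. This self-bounding is what will simultaneously deliver conditions~(b) and~(c).

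First I would set $f(0) := 0$ and
\[
f(n+1) := f(n) + \max_{i \le n} p_i(f(n)) + \max_{i \le n} r_i(f(n)) + \tau(n, f(n)) + 1,
\]
where $\tau(n, y)$ is a primitive recursive overhead term chosen as an upper bound on the bookkeeping cost of one recursive step of $M_f$ (using the uniformly primitive recursive step-$s$ approximation $\varphi_{e, s}$ from Section~\ref{subsect:prelim}). The set $A$ is then defined by placing $f(n) \in A$ iff $p_n(f(n)) \ne 1$, and putting nothing outside $\mathrm{range}(f)$ into $A$.

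The verification would proceed as follows. Condition~(d) is immediate from the recursion, with $n_i := i$. For~(a), a standard diagonal argument: if $p_e$ is Boolean-valued then $\chi_A(f(e)) \ne p_e(f(e))$ by construction, and otherwise $p_e \ne \chi_A$ trivially. For~(b), telescoping the recursion gives $f(n) \ge \sum_{k < n}(\tau(k, f(k)) + 1)$, so $M_f$ halts on input $n$ in at most $f(n)$ steps; hence, with $e_f$ an index for $M_f$, one has $\Gamma_f = \{(n, y) : \varphi_{e_f, y}(n) = y\}$, which is primitive recursive. For~(c), computing $\chi_A(f(n))$ from $n$ reduces to producing $f(n)$ (in at most $f(n)$ steps by the same telescoping bound) and then evaluating $p_n(f(n))$ (in at most $r_n(f(n))$ steps), both of which are absorbed into $f(n+1)$ by the choice of the summands above.

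The principal obstacle is the self-referential nature of the time bound: the padding $\tau(n, y)$ must be primitive recursive in $(n, y)$ and simultaneously a genuine upper bound on the single-step cost of the (non-primitive-recursive) recursion defining $f$. This is manageable because each time-bound $r_e(y)$ is primitive recursive in $y$, the universal step simulator $\varphi_{e, s}$ is primitive recursive in $(e, s, y)$, and the cost of retrieving the $e$-th index in the effective list of primitive recursive functions can be explicitly padded into $\tau$; a concrete choice of $\tau(n, y)$ built from $n$, $y$, and $\max_{i \le n} r_i(y)$, with a constant factor absorbing the simulator overhead, will suffice.
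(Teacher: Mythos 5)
Your proposal is correct and follows essentially the same strategy as the paper: make $f(n+1)$ dominate both $p_0(f(n)),\dots,p_n(f(n))$ and the total time needed to compute everything so far, and diagonalize $A$ against $p_n$ at the point $f(n)$, with $A\subseteq\mathrm{range}(f)$. The only real difference is in the bookkeeping: the paper takes $f(s+1)$ to be at least the \emph{actual} step count $N$ of the construction so far (declaring intermediate numbers out of $\mathrm{range}(f)$ and out of $A$ in real time), which makes the primitive recursiveness of $\Gamma_f$ nearly automatic, whereas you pre-specify an explicit primitive recursive overhead $\tau$ and must separately argue time-constructibility --- a standard but fiddlier route to the same bound.
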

\begin{proof}
	We construct (stage-by-stage) a strictly increasing computable function $f(x)$ and a computable set $A \subseteq \mathrm{range}(f)$.
	
	At stage $0$ we set $f(0) = 0$. 	
We assume that the following objects are defined by the beginning of stage $s+1$:
	\begin{itemize}
		\item the values $f(0) < f(1) < \dots < f(s)$;
		
		\item the value $\chi_A(x)$, for each $x < f(s)$.
	\end{itemize}
	In addition, the following conditions are satisfied:
	\begin{itemize}
		\item[($\ast$)]  $A \cap \{ x : x \leq f(s)-1\} \subseteq \mathrm{range}(f)$;
		
		\item[($\ast\ast$)] $f(t) > p_j (f(t-1))$ for all $0\leq j<t \leq s$.
	\end{itemize}
	
	Our main goal at the stage $s+1$ is to appropriately define $\chi_A(f(s))$ and $f(s+1)$.
	To do this, we successively compute the following values:
	\begin{itemize}
		\item[(a)] $p_0(f(s)), p_1(f(s)), \dots, p_s(f(s))$;
		
		\item[(b)] $z = 1 + \max(f(s),p_0(f(s)),\dots,p_s(f(s)))$;
		
		\item[(c)] the number of steps $N$ which is necessary for successively computing $f(0),f(1),\dots,f(s)$ and all the values from~(a)--(b).
	\end{itemize}
	Without loss of generality, we may assume that $N \geq z$.
	
	While the values~(a)--(c) are being computed (in general, these computation processes are `slow' for us), for the numbers $y \in \{ f(s)+1,f(s)+2, f(s)+3,\dots\}$, one-by-one, we `quickly' declare the following:
	\[
		y\not\in \mathrm{range}(f) \text{ and } y \not\in A.
	\]
	Such a declaration ensures that the graph of the constructed function $f$ will be primitive recursive.
	
	After the required values have been computed, we take the least $w \geq N$ such that the values $\chi_{\mathrm{range}(f)}(w)$ and $\chi_A(w)$ have not been defined yet. We set $f(s+1) = w$ and
	\begin{equation}\label{equ:chi_a-aux}
		\chi_A(f(s)) = \begin{cases}
			0, & \text{if } p_s(f(s)) \geq 1,\\
			1, & \text{if } p_s(f(s)) = 0.
		\end{cases}
	\end{equation}	
	
	It is clear that $f(s+1) \geq z > p_j(f(s))$ for all $j \leq s$. Therefore, the corresponding conditions~($\ast$) and $(\ast\ast)$ will remain true at the beginning of the next stage $s+2$. In addition, the value $\chi_A(f(s))$ can be computed in at most $N \leq f(s+1)$ steps.
	
	The stage $s+1$ is finished. This concludes the construction.
	
	Eq.~(\ref{equ:chi_a-aux}) guarantees that the constructed set $A$ is not primitive recursive. Condition~($\ast\ast$) implies that the function $f$ satisfies the last condition of Definition~\ref{def:super-sparse}. The other conditions of Definition~\ref{def:super-sparse} can be easily deduced from an  analysis of the construction. Lemma~\ref{lem:super-sparse-exist} is proved.
\end{proof}

\medskip

\subsection{Proof of Proposition~\ref{prop:eff-dense-BA}}\label{subsect:01}

We prove that for a super $pr$-sparse set $A$, the partial order $\mathcal{B}(\mathbf{a})$ is (effectively) isomorphic to the quotient structure $\mathcal{C}^{\#}/ \!\approx$, where $\mathcal{C} = (\mathcal{C}^{\#};\approx)$ is the $\Sigma^0_2$-Boolean algebra given below. Thus, the structure $\mathcal{B}(\mathbf{a})$ can be identified with the $\Sigma^0_2$-algebra $\mathcal{C}$. After that, it will be sufficient to check that the $\Sigma^0_2$-algebra $\mathcal{C}$ is effectively dense.

\begin{notation}
	(1)\ The auxiliary Boolean algebra $\mathcal{C}^{\#}$ consists of all sets of the form $A\cap X$, where $X$ is a primitive recursive set. The Boolean operations $\vee, \wedge, \overline{(\cdot )}$ on $\mathcal{C}^{\#}$ are defined as follows:
	\begin{itemize}
		\item $(A\cap X) \vee (A \cap Y) = A\cap (X\cup Y)$; 
		
		\item $(A\cap X) \wedge (A\cap Y) = A\cap (X\cap Y)$;
		
		\item the complement of $A\cap X$ (inside the algebra $\mathcal{C}^{\#}$) is equal to $A\cap (\omega \setminus X)$.
	\end{itemize}
	Fix a (standard) effective list $(X_e)_{e\in\omega}$ of all primitive recursive sets. By identifying the set $A\cap X_e$ with the index $e\in \omega$, we may  assume that the domain of $\mathcal{C}^{\#}$ is equal to $\omega$.
	
	(2)\  The $\Sigma^0_2$-Boolean algebra $\mathcal{C} = (\omega; \vee,\wedge,\overline{(\cdot)}; \approx)$ is defined as follows:
	\begin{itemize}
		\item the structure $\mathcal{C}^{\#} = (\omega; \vee,\wedge,\overline{(\cdot)} )$ is given above;
		
		\item $i \approx j$ if and only if  the symmetric difference $(A\cap X_i) \triangle (A\cap X_j)$ is a primitive recursive set.
	\end{itemize}
\end{notation}

Note that indeed, $\mathcal{C}$ is a $\Sigma^0_2$-Boolean algebra:
\begin{itemize}
	\item There is a computable function $f_{\vee}(x,y)$ such that $X_i \cup X_j = X_{f_{\vee}(i,j)}$. A similar fact is true for $\wedge$ and $\overline{(\cdot)}$.
	
	\item The condition $(i \approx j)$ is equivalent to the following $\Sigma^0_2$-formula:
	\[
	\exists k \forall x [ \chi_{X_k}(x) = \chi_A(x) \cdot \max (\chi_{X_i}(x) \cdot (1 - \chi_{X_j}(x)), \chi_{X_j}(x) \cdot (1 - \chi_{X_i}(x)))].
	\]
\end{itemize}

\medskip

In order to establish an isomorphism between $\mathcal{B}(\mathbf{a})$ and $\mathcal{C}$, it is sufficient to prove the following result.

\begin{lemma}\label{lem:BA-01}
	Suppose that a set $A$ is super $pr$-sparse via a function $f$.
	
	(i)\ Let $\mathbf{b}$ and $\mathbf{c}$ be degrees from $\mathcal{B}(\mathbf{a})$ such that 
	\begin{equation}\label{equ:complemented-pair}
		\mathbf{b} \wedge \mathbf{c} = \mathbf{0}\ \text{and}\ \mathbf{b} \vee \mathbf{c} = \mathbf{a}.
	\end{equation}
	Then there exists a primitive recursive set $X$ such that $A\cap X \in \mathbf{b}$ and $A\cap \overline{X} \in \mathbf{c}$.
	
	(ii)\ Let $X$ be a primitive recursive set. Then Condition~(\ref{equ:complemented-pair}) holds for the degrees $\mathbf{b} = \deg^{pr}_m(A\cap X)$ and $\mathbf{c} = \deg^{pr}_m(A\cap \overline{X})$.
	
	(iii)\ Let $X,Y$ be primitive recursive sets. Then the condition $(A\cap X) \leq^{pr}_m (A\cap Y)$ holds if and only if:
	\begin{itemize}
		\item either $A\cap Y \neq \emptyset$ and the set $A\cap(X \setminus Y)$ is primitive recursive,
		
		\item or $A\cap Y = A\cap X = \emptyset$.
	\end{itemize}
	In particular, the properties~(i)--(iii) imply that the map $e\mapsto \deg_{m}^{pr}(A\cap X_e)$ induces an isomorphism from $\mathcal{C}$ onto $\mathcal{ B}(\mathbf{a})$.
\end{lemma}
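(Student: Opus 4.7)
The strategy is to prove parts~(iii), (ii), (i) in order, since the ingredients compound, and then read off the final isomorphism claim formally.

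\emph{Part~(iii).} The direction $(\Leftarrow)$ is a direct construction. If $A\cap Y=A\cap X=\emptyset$, any constant function works. Otherwise, given a PR decision of $A\cap(X\setminus Y)$ and fixed constants $y_0\in A\cap Y$ and $z_0\notin A$, send $x\in X\cap Y$ to $x$; send $x\in X\setminus Y$ to $y_0$ or $z_0$ according to $\chi_{A}(x)$; and send each $x\notin X$ to $z_0$. The direction $(\Rightarrow)$ is where sparsity is used crucially. Assume $g$ witnesses $A\cap X\leq^{pr}_m A\cap Y$; if $A\cap Y=\emptyset$ then $A\cap X=\emptyset$ trivially, so suppose $A\cap Y\neq\emptyset$, and let $n_g$ be the threshold from Definition~\ref{def:super-sparse}(d) applied to $g$. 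On input $x$, reject unless $x\in(X\setminus Y)\cap\rng(f)$, and use Lemma~\ref{lem:Graph} to recover the unique $m$ with $f(m)=x$; for the finitely many $m\leq n_g$ consult a hardcoded table. For $m>n_g$, the estimate $g(f(m))<f(m+1)$ forces $g(x)=f(k)$ with $k\leq m$; the additional constraint $g(x)\in A\cap Y\subseteq Y$ together with $x\notin Y$ excludes $k=m$, hence $k<m$ and $f(k+1)\leq f(m)=x$. By condition~(c), $\chi_{A}(f(k))$ is then decidable within $x$ steps, so the whole procedure is primitive recursive.

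\emph{Part~(ii).} The join is standard: $A\equiv^{pr}_m(A\cap X)\oplus(A\cap\overline X)$ via the PR code $x\mapsto 2x+\chi_{\overline X}(x)$, with an immediate reverse reduction, so $\mathbf{b}\vee\mathbf{c}=\mathbf{a}$. For the meet, suppose a computable $D$ satisfies $D\leq^{pr}_m A\cap X$ via $g_1$ and $D\leq^{pr}_m A\cap\overline X$ via $g_2$. On input $x$, reject (output $0$) unless both $g_1(x)\in X\cap\rng(f)$ and $g_2(x)\in\overline X\cap\rng(f)$ hold; otherwise recover $k_i$ with $f(k_i)=g_i(x)$. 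Since $g_1(x)\in X$ and $g_2(x)\in\overline X$ are distinct, $k_1\neq k_2$; writing $k=\min(k_1,k_2)$ and $k'=\max(k_1,k_2)$ we obtain $f(k+1)\leq f(k')=\max(g_1(x),g_2(x))$, a PR bound in $x$. Thus $\chi_{A}(f(k))$ is PR-decidable and equals $\chi_{D}(x)$; so every common lower bound is primitive recursive, and $\mathbf{b}\wedge\mathbf{c}=\mathbf{0}$.

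\emph{Part~(i).} Fix representatives $B\in\mathbf{b}$, $C\in\mathbf{c}$ and PR reductions $f_A:A\leq^{pr}_m B\oplus C$ and $f_B:B\leq^{pr}_m A$, and set $X:=\{x:f_A(x)\text{ is even}\}$. Then $A\cap X\leq^{pr}_m B$ via $x\mapsto f_A(x)/2$ (with a default value outside $X$). For the converse $B\leq^{pr}_m A\cap X$, let $Y:=\{y:f_B(y)\notin X\}$ (PR). On $\overline Y$ the map $f_B$ already witnesses $y\in B\iff f_B(y)\in A\cap X$. On $Y$, writing $f_A(f_B(y))=2z+1$ yields $y\in B\iff z\in C$, so $B\cap Y\leq^{pr}_m C$; together with the trivial $B\cap Y\leq^{pr}_m B$, the hypothesis $\mathbf{b}\wedge\mathbf{c}=\mathbf{0}$ forces $B\cap Y$ to be primitive recursive. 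Patching the two cases---using fixed constants inside and outside $A\cap X$, which exist in the nondegenerate case $\mathbf{b}\neq\mathbf{0}$; the case $\mathbf{b}=\mathbf{0}$ is handled by a trivial choice of $X$---yields $B\leq^{pr}_m A\cap X$. Symmetrically, $A\cap\overline X\equiv^{pr}_m C$. The isomorphism is then formal: (iii) gives well-definedness, injectivity and order-preservation of $e\mapsto\deg^{pr}_m(A\cap X_e)$; (ii) places its image inside $\mathcal{B}(\mathbf{a})$; (i) supplies surjectivity. The main obstacle throughout is the timing argument appearing in parts~(iii)$(\Rightarrow)$ and (ii): Definition~\ref{def:super-sparse}(d) is precisely what forces the target index of a PR reduction to lie strictly below either the source index or its partner, turning condition~(c)'s bound $f(k+1)$ into a PR-bounded function of the input and thereby making $\chi_A$ tractable along reductions.
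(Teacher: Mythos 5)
Your proposal is correct, and parts (i), (ii), and the final isomorphism claim follow essentially the same route as the paper (same choice of $X$ from the join reduction in (i), same two-preimage timing argument via Lemma~\ref{lem:Graph} and condition~(c) in (ii)). The one genuine divergence is in (iii)$(\Rightarrow)$: you give a direct computational argument, using condition~(d) of Definition~\ref{def:super-sparse} to force $g(f(m))<f(m+1)$ and hence land the image at some $f(k)$ with $k<m$, plus a hardcoded table for the finitely many $m\leq n_g$. The paper instead derives (iii)$(\Rightarrow)$ purely order-theoretically from (ii): since $A\cap(X\setminus Y)\leq^{pr}_m A\cap\overline{Y}$ and $A\cap(X\setminus Y)\leq^{pr}_m A\cap X\leq^{pr}_m A\cap Y$, and (ii) gives $\deg^{pr}_m(A\cap Y)\wedge\deg^{pr}_m(A\cap\overline{Y})=\mathbf{0}$, the set $A\cap(X\setminus Y)$ is primitive recursive with no further timing analysis. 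Your route is self-contained but duplicates the timing work nonuniformly and, more significantly, invokes condition~(d), which the paper deliberately avoids throughout Section~3 (it remarks after Proposition~\ref{prop:eff-dense-BA} that item~(d) is never used there); one could also repair your direct argument to avoid (d) by splitting on whether $k<m$ or $k>m$ and in the latter case computing $\chi_A(f(m))$ within $g(x)$ steps, exactly as in the proof of (ii). Minor quibbles only: in (i) you should note explicitly that $A\cap X=\emptyset$ would force $B\leq^{pr}_m A\cap\overline{X}\leq^{pr}_m C$ and hence $\mathbf{b}=\mathbf{0}$, so constants inside $A\cap X$ do exist in the nondegenerate case; and in (iii)$(\Rightarrow)$ you should add the (primitive recursive) check $g(x)\in\rng(f)\cap Y$ before extracting $k$, rejecting otherwise. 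Neither affects correctness.
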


Before proving Lemma~\ref{lem:BA-01}, we establish some simple properties of $pr$-$m$-reducibility:

\begin{proposition}\label{claim:001}
	Let $B\subsetneq \omega$, and let $X$ be a primitive recursive set. Then
	\begin{itemize}
		\item[(a)] $B\cap X \leq^{pr}_m B$;
		
		\item[(b)] $B \equiv^{pr}_m (B\cap X) \oplus (B\cap \overline{X})$.
	\end{itemize} 
\end{proposition}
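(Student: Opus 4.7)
The plan rests on a single elementary observation: since $B \subsetneq \omega$, we may fix some element $c \in \omega \setminus B$ and use it as a ``rejection sink'' in all our $pr$-$m$-reductions.

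For part~(a), I would define
\[
f(x) = \begin{cases} x, & \text{if } x \in X, \\ c, & \text{if } x \notin X. \end{cases}
\]
This is primitive recursive because $X$ is primitive recursive and $c$ is a constant. A case split on membership of $x$ in $X$ verifies $x \in B \cap X \Leftrightarrow f(x) \in B$: when $x \in X$ we have $f(x) = x$, so $f(x) \in B$ iff $x \in B$ iff $x \in B \cap X$; when $x \notin X$ we have $f(x) = c \notin B$, matching the fact that $x \notin B \cap X$.

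Part~(b) has two directions, both built on the same device. For $B \leq^{pr}_m (B \cap X) \oplus (B \cap \overline{X})$, I would take $h(x) = 2x$ if $x \in X$ and $h(x) = 2x+1$ otherwise. Using the standard convention that even indices select from the left summand and odd indices from the right, a case split on $x \in X$ yields $h(x) \in (B\cap X) \oplus (B \cap \overline{X}) \Leftrightarrow x \in B$. For the reverse direction $(B \cap X) \oplus (B \cap \overline{X}) \leq^{pr}_m B$, I would decompose the input $y$ by parity and set $g(y) = n$ when either $y = 2n$ with $n \in X$, or $y = 2n+1$ with $n \notin X$, and $g(y) = c$ in the two remaining (``wrong side'') cases. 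The verification again collapses to a short case analysis on the parity of $y$ and the membership of $n$ in $X$.

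I do not anticipate any real obstacle: the content is simply the primitive recursive analogue of the familiar $m$-reducibility facts, and the only care needed is to check that each reduction function is assembled from constants, parity extraction, and membership tests in the primitive recursive set $X$, hence is itself primitive recursive.
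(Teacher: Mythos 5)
Your proposal is correct and follows essentially the same route as the paper: the same rejection-sink reduction for part~(a) and the same parity-coded map $h$ for the forward direction of part~(b). The only cosmetic difference is that the paper obtains the reverse direction $(B\cap X)\oplus(B\cap\overline{X})\leq^{pr}_m B$ by citing part~(a) together with the join property of $\oplus$, whereas you write that reduction out explicitly; both are fine.
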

\begin{proof}
	(a)\ We choose an element $c\not\in B$. The reduction $g\colon B\cap X \leq^{pr}_m B$ is defined as follows:
	\[
	g(x) = \begin{cases}
		x, & \text{if } x\in X,\\
		c, & \text{if } x\not\in X.
	\end{cases}
	\]
	
	(b)\ By item~(a), it  is sufficient to construct a reduction $h\colon B\leq^{pr}_m (B\cap X) \oplus (B\cap \overline{X})$. We define
	\[
	h(x) = \begin{cases}
		2x, & \text{if } x\in X,\\
		2x+1, & \text{if } x\not\in X.
	\end{cases}
	\]
\end{proof}

\medskip

\begin{proof}[Proof of Lemma~\ref{lem:BA-01}]
	\textbf{(i)}\ We choose computable sets $B \in \mathbf{b}$ and $C\in\mathbf{c}$. If one of the sets $B$ or $C$ is primitive recursive, then the desired fact is obvious. (For example, if $B$ is primitive recursive, then we choose $X=\{ a\}$ for some element $a\in A$.) Thus, we may assume that $B$ and $C$ are not primitive recursive.
	
	Since $\mathbf{b} \vee \mathbf{c} = \mathbf{a}$, there is a reduction $g\colon A\leq^{pr}_m B\oplus C$. We define a primitive recursive set
	\[
		X = \{ x : g(x) \text{ is even}\}.
	\]
	It is clear that the function $g_1(x) = \lfloor g(x)/2\rfloor$ provides two reductions: $A\cap X \leq^{pr}_m B$ and $A\cap \overline{X} \leq^{pr}_m C$. 
	
	We show that $B\leq^{pr}_m A\cap X$ (the proof for $C\leq^{pr}_m A\cap \overline{X}$ is similar). Since $B\leq^{pr}_m A\equiv^{pr}_m (A\cap X) \oplus (A\cap \overline{X})$ (see Proposition~\ref{claim:001}), we obtain that $B\equiv^{pr}_m B_1 \oplus B_2$, where $B_1\leq^{pr}_m A\cap X$ and $B_2 \leq^{pr}_m A\cap \overline{X}$. 
	
	The fact that $B_2 \leq^{pr}_m B$ and $B_2 \leq^{pr}_m A\cap\overline{X} \leq^{pr}_m C$ implies that the set $B_2$ is primitive recursive (recall that $\mathbf{b} \wedge \mathbf{c} = \mathbf{0}$). Therefore, we have $B\equiv^{pr}_m B_1 \oplus B_2 \equiv^{pr}_m B_1 \leq^{pr}_m A\cap X$.
	Hence, we conclude that $B\equiv_{m}^{pr} A\cap X$ and $C\equiv_{m}^{pr} A\cap \overline{X}$.

	\medskip

\textbf{(ii)}\ Let $X$ be a primitive recursive set. Proposition~\ref{claim:001} implies that $A\equiv^{pr}_m (A\cap X) \oplus (A\cap \overline{X})$, that is $\mathbf{a} = \mathbf{b} \vee \mathbf{c}$. Recall that the set $A$ is super $pr$-sparse via the function $f(x)$.

Now let $D$ be an arbitrary set such that $p \colon D\leq^{pr}_m A\cap X$ and $q \colon D \leq^{pr}_m A\cap \overline{X}$. We describe a primitive recursive algorithm for computing the characteristic function $\chi_D(x)$.

1)\ Compute the values $p(x)$ and $q(x)$. By Lemma~\ref{lem:Graph}, the set $\mathrm{range}(f)$ is primitive recursive. If $p(x) \not\in \mathrm{range}(f)$ or $q(x) \not\in \mathrm{range}(f)$, then it is clear that $\{ p(x), q(x) \} \not\subseteq A$ and thus, $x\not\in D$.

If $p(x) \not\in X$ or $q(x)\not\in \overline{X}$, then in a similar way, we can obtain that $x\not\in D$.

2)\ Hence, we may assume that $p(x) \in \mathrm{range}(f) \cap X$ and $q(x) \in \mathrm{range}(f) \cap \overline{X}$ (in particular, $p(x) \neq q(x)$). By Lemma~\ref{lem:Graph}, we can find (via a primitive recursive algorithm) the numbers $k$ and $l$ such that $p(x) = f(k)$ and $q(x) = f(l)$. Without loss of generality, we assume that $k < l$ (the case when $k > l$ is treated similarly).

3)\ By Property~(c) of Definition~\ref{def:super-sparse}, one can find the value $\chi_A(f(k))$ in $q(x) = f(l)$ computation steps. Then the fact that $p\colon D\leq^{pr}_m A\cap X$ implies that $\chi_D(x) = \chi_A(f(k)) \cdot \chi_X(f(k))$.

The algorithm described above shows that the set $D$ is primitive recursive. Therefore, we have $\mathbf{b} \wedge \mathbf{c} = \mathbf{0}$.

\medskip

\textbf{(iii)}\ For the case when $A\cap Y = \emptyset$, the desired property is clearly satisfied. Thus, we may assume that $A\cap Y \neq \emptyset$.

$(\Leftarrow)$. Suppose that the set $A\cap(X\setminus Y)$ is primitive recursive. We choose elements $c\in A\cap Y$ and $d\not\in A$. Then the function
\[
h(x) = \begin{cases}
	c, & \text{if } x\in A\cap (X\setminus Y),\\
	d, & \text{if } x \not\in X,\\
	x, & \text{otherwise},
\end{cases}
\]
provides a reduction $A\cap X \leq^{pr}_m A\cap Y$. In order to show that $h$ is indeed a reduction, it is sufficient to notice the following: if $h(x)  \not\in\{ c,d\}$, then we have either $x \in A\cap X \cap Y$ or $x\not\in A$.

$(\Rightarrow)$. Let $A\cap X \leq^{pr}_m A\cap Y$. By Proposition~\ref{claim:001}, we have $A\cap X \equiv^{pr}_m (A\cap X \cap Y) \oplus (A\cap (X\setminus Y))$. Similarly,
\begin{equation}\label{equ:small-aux-001}
	A\cap \overline{Y} \equiv^{pr}_m (A\cap (X\setminus Y)) \oplus (A\cap \overline{Y} \cap \overline{X}).
\end{equation}

By item~(ii), the infimum of $\deg^{pr}_m(A\cap \overline{Y})$ and $\deg^{pr}_m(A\cap Y)$ is equal to $\mathbf {0}$. By Eq.~(\ref{equ:small-aux-001}), we obtain that the infimum for $\deg^{pr}_m(A\cap (X\setminus Y))$ and $\deg^{pr}_m(A\cap Y)$ is also equal to $\mathbf{0}$. Now, since $A\cap(X\setminus Y) \leq^{pr}_m A\cap X \leq^{pr}_m A\cap Y$, we deduce that $A\cap (X\setminus Y)$ is primitive recursive. Lemma~\ref{lem:BA-01} is proved.
\end{proof}

\bigskip

Now, in order to prove Proposition~\ref{prop:eff-dense-BA}, it remains to establish the following result.

\begin{lemma}\label{lem:BA-02}
	The $\Sigma^0_2$-Boolean algebra $\mathcal{B}(\mathbf{a})$ is effectively dense.
\end{lemma}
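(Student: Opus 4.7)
The plan is to identify $\mathcal{B}(\mathbf{a})$ with the $\Sigma^0_2$-Boolean algebra $\mathcal{C}$ via Lemma~\ref{lem:BA-01} and to produce a total $\Delta^0_2$-computable function $F: \omega \to \omega$ such that, whenever $A \cap X_e$ is not primitive recursive, we have $\mathbf{0} \prec [F(e)]_\approx \prec [e]_\approx$ in $\mathcal{C}$. The natural target is $F(e) := $ the canonical pr-index of $X_e \cap X_{j(e)}$ for some $\Delta^0_2$-computable function $j$; since $X_{F(e)} \subseteq X_e$ already yields $[F(e)]_\approx \preceq [e]_\approx$ by Lemma~\ref{lem:BA-01}(iii), it remains only to ensure that $A \cap X_e \cap X_{j(e)}$ and $A \cap X_e \setminus X_{j(e)}$ are both non-primitive recursive.

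To define $j$ in a $\Delta^0_2$ way, I would use the Limit Lemma. The predicate ``$A \cap X_e \cap X_j$ is primitive recursive'' unfolds to $\exists k\,(X_k = A \cap X_e \cap X_j)$, which is $\Sigma^0_2$ in $(e, j)$ uniformly; the same is true of ``$A \cap X_e \setminus X_j$ is primitive recursive''. Using $\emptyset'$ as an oracle one can enumerate the \emph{bad} indices $j$ for $e$ (those failing at least one of the two conditions), and then set $j_s(e) := \min\{j : (e,j) \text{ not yet declared bad by stage } s\}$. Since bad indices only accumulate, this sequence is nondecreasing, and its limit $j(e)$ equals the least good $j$ — provided such a $j$ exists. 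The resulting $F$ is then total and $\Delta^0_2$-computable.

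Everything therefore reduces to a \emph{splitting lemma}: for every pr $X_e$ with $A \cap X_e$ non-primitive recursive, there exists a pr $X_j$ such that $A \cap X_e \cap X_j$ and $A \cap X_e \setminus X_j$ are both non-primitive recursive. I expect this to be the main obstacle. To prove it, I would transfer the problem to $\omega$: the set $W_e := \{n : f(n) \in X_e\}$ is primitive recursive (since the graph of $f$ is pr) and infinite (otherwise $A \cap X_e$ would be finite and hence pr), and by Lemma~\ref{lem:Graph} pr subsets of $\mathrm{range}(f)$ correspond bijectively to pr subsets of $\omega$ via $f$. Hence it suffices to find a pr set $L \subseteq W_e$ making both $L \cap f^{-1}(A)$ and $(W_e \setminus L) \cap f^{-1}(A)$ non-primitive recursive as subsets of $\omega$. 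The construction exploits condition~(d) of Definition~\ref{def:super-sparse} — that $f(n+1) > p_i(f(n))$ eventually for every pr function $p_i$ — which prevents any pr function from matching $\chi_A$ on a pr-definable infinite subfamily of $\{f(n) : n \in W_e\}$; this time-complexity obstruction, combined with the pr rank function on $W_e$ (which produces pr partitions of $W_e$ by position-parity), yields the required $L$. The delicate point, where I expect to spend the most effort, is that $f^{-1}(A) \cap W_e$ could a priori concentrate on one side of a given rank-parity partition; to handle this one iterates through a uniformly pr family of refinements (different moduli, or shifts of the parity classes) and argues, via the super pr-sparse time-complexity obstruction above, that at least one refinement splits $f^{-1}(A) \cap W_e$ non-trivially.
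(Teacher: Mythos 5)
Your reduction of the problem is sound as far as it goes: by Lemma~\ref{lem:BA-01}(iii), producing for each $e$ with $A\cap X_e$ non-primitive-recursive an index $j(e)$ such that both $A\cap X_e\cap X_{j(e)}$ and $A\cap X_e\setminus X_{j(e)}$ are non-primitive-recursive would indeed give the required $F$ \emph{as a function}. The first genuine gap is the complexity claim. The set $B_e$ of ``bad'' indices is uniformly $\Sigma^0_2$, hence c.e.\ in $\emptyset'$ but with no \emph{computable} monotone enumeration; so your stage function $j_s(e)$ is only $\emptyset'$-computable, and the limit of a nondecreasing $\emptyset'$-computable sequence is in general only $\Delta^0_3$, not $\Delta^0_2$ (to certify that no smaller index will later be declared bad is a $\Pi^0_2$ question). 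This is not merely presentational: the least non-element of a uniformly $\Sigma^0_2$ family can compute a $\Sigma^0_2$-complete set, so ``search for the least good $j$'' genuinely overshoots the $\Delta^0_2$ bound that effective density of a $\Sigma^0_2$-Boolean algebra requires. (Totality of $F$ when $A\cap X_e$ \emph{is} primitive recursive --- in which case every $j$ is bad and $j_s(e)\to\infty$ --- is a second, smaller defect of the same search.)

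The second gap is the splitting lemma itself, which you correctly identify as the heart of the matter but do not prove. Your sketch reduces it to the claim that some member of a fixed, uniformly primitive recursive family of ``rank-parity'' partitions of $W_e$ must split $f^{-1}(A)\cap W_e$ into two non-primitive-recursive halves; nothing in the sketch rules out that for \emph{every} partition in such a pre-chosen countable family one side is primitive recursive, and condition~(d) of Definition~\ref{def:super-sparse} is not obviously enough to exclude this. The paper avoids both problems at once by \emph{constructing} the splitting set $Y\subseteq X$ directly: a single prompt, stage-by-stage construction in which $Y$ copies $X$ until a disagreement witness against $p_i$ is found on the $Y$-side, then copies $\emptyset$ until one is found on the $(X\setminus Y)$-side, for each $i$ in turn. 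This diagonalization makes $Y$ primitive recursive by construction (so no search over indices is needed), simultaneously builds the reduction $A\cap Y\le^{pr}_m A\cap X$, and makes the index function $\emptyset'$-computable because the only oracle questions are whether each $R_i$-stage terminates. Interestingly, the paper's argument uses only conditions (a)--(c) of super $pr$-sparseness here, whereas your sketch leans on condition~(d); if you want to complete your route, you should replace the fixed family of partitions by an adaptive construction of this kind.
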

\begin{proof}
For a given primitive recursive set $X =X_e$, we construct a new primitive recursive set $Y = X_{F(e)} \subset X$ with the following properties: if $A\cap X$ is not primitive recursive, then $A\cap Y$ is also non-primitive recursive and $A\cap Y <^{pr}_m A\cap X$. 
We also establish that (in this construction) the function $e\mapsto F(e)$ is $\mathbf{0}'$-computable: this gives us precisely the notion of effective density for our $\Sigma^0_2$-Boolean algebra $\mathcal{B}(\mathbf{a})$.

For a primitive recursive set $X$, we satisfy the following requirements:
\begin{itemize}
	\item[$R_{i}$:] if there are infinitely many $x$ such that $\chi_{A\cap X}(x) \neq p_i(x)$, then there exist $x'$ and $x''$ such that $\chi_{A\cap Y}(x') \neq p_i(x')$ and $\chi_{A\cap (X\setminus Y)}(x'') \neq p_i(x'')$. 
\end{itemize}
If the set $A\cap X$ is not primitive recursive, then $\exists^{\infty} x(\chi_{A\cap X}(x) \neq p_i(x))$. Hence, for such $A\cap X$ the requirements  $R_i$  ensure the following:
\begin{itemize}
	\item the set $A\cap Y$ is not primitive recursive, 
	
	\item the set $A\cap(X\setminus Y )$ is not primitive recursive~--- by Lemma~\ref{lem:BA-01}.(iii), this guarantees that $A\cap X \nleq^{pr}_m A\cap Y$.
\end{itemize}	
In addition, in the construction we will build a primitive recursive function $h \colon A\cap Y \leq^{pr}_m A\cap X$. This will imply all the desired properties for $A\cap Y$.

We fix an element $d\not\in A$. 

\medskip

\emph{The construction} consists of $R_i$-stages for $i\in\omega$ (the stages are carried out one-by-one, in ascending order of the index $i$).

Let $m_0$ be the least number such that $\chi_Y(m_0)$ has not been defined by the beginning of the $R_i$-stage. For the numbers $x\in \{m_0,m_0+1,m_0+2,\dots\}$, we successively `promptly' define $\chi_Y(x) := \chi_X(x)$ and $h(x) := x$. In other words, the constructed set $Y$ promptly `copies' the set $X$.

At the same time, we run the computation processes (that are `slow' for us) for the following values: 
\[
p_i(m_0),\chi_{A\cap X}(m_0),p_i(m_0+1),\chi_{A\cap X}(m_0+1),p_i(m_0+2), \chi_{A\cap X} (m_0+2),\dots.
\]

The procedures described above continue, until one of the following two cases is met: 
\begin{itemize}
	\item[(a)] we have found a (least) $y\geq m_0$ such that $\chi_{A\cap X}(y) =  0$ and $p_i(y)\neq 0$;
	
	\item[(b)] we have found $z\geq m_0$ such that $\chi_{A\cap X}(z) = 1$ and $p_i(z) \neq 1$.
\end{itemize}

\emph{Case~(a).} In this case, we have $y\not\in A\cap X$; hence, $y\not\in A\cap (X\setminus Y)$. Since the set $Y$ had copied $X$, we also get that $y\not\in A\cap Y$. Since $p_i(y) \neq 0$, we conclude that we can choose $x'=x''=y$ for the requirement $R_i$. Our $R_i$ is forever satisfied, and we proceed to the $R_{i+1}$-stage.

\emph{Case~(b).} Since $Y$ had copied $X$, we deduce that we have found $z$ such that $\chi_{A\cap Y}(z) = 1 \neq p_i(z)$. Let $m_1$ be the least number such that $\chi_Y(m_1)$ has not been defined by the moment when this $z$ was found.

Then for the numbers $w\in\{m_1,m_1+1,m_1+2,\dots\}$, we successively define $\chi_Y(w) = 0$ and $h(w) = d$. At the same time, we run computations for 
\[
	p_i(m_1),\chi_{A\cap X}(m_1),p_i(m_1+1),\chi_{A\cap X}(m_1+1),\dots.
\]

We continue this procedure until we find a number $z_1 \geq m_1$ such that $\chi_{A\cap X}(z_1) \neq p_i(z_1)$. Then, since $z_1 \not\in Y$, we get that $\chi_{A\cap (X\setminus Y)}(z_1) = \chi_{A\cap X}(z_1) \neq p_i (z_1)$. The requirement $R_i$ is forever satisfied (for $x'=z$ and $x''=z_1$), and we proceed to the $R_{i+1}$-stage.

The construction is described.

\medskip

\emph{Verification.} If the set $A\cap X$ is not primitive recursive, then each $R_i$-requirement will be satisfied with the help of the $R_i$-stage. Indeed, there are infinitely many $x$ such that $\chi_{A\cap X}(x) \neq p_i(x)$. Therefore, at the $R_i$-stage, one of Cases~(a) or~(b) will be eventually satisfied. In Case~(b) we will also eventually find a suitable number $z_1$.

Notice that for the case when $A\cap X$ is primitive recursive, the construction may `get stuck' (forever) at some $R_i$-stage. Nevertheless, it is easy to show that the constructed function $h$ always provides a reduction $A\cap Y \leq^{pr}_m A\cap X$.

We show that the function $e\mapsto F(e)$ (where $Y=X_{F(e)}$) is $\mathbf{0}'$-computable. In order to (uniformly) find the index $F(e)$, it is sufficient  to check for a given $i\in\omega$, whether the $R_i$-stage for our $X = X_e$ eventually stops. It is clear that this checking procedure is effective in the oracle $\mathbf{0}'$. Hence, the function $F$ is $\mathbf{0}'$-computable.

Lemma~\ref{lem:BA-02} and Proposition~\ref{prop:eff-dense-BA} are proved.
\end{proof}

We note that the proof of Proposition~\ref{prop:eff-dense-BA} never uses the item~(d) of Definition~\ref{def:super-sparse}.

\medskip

%%%%%%%%%%%%%%%%%%%%
\section{The main lemma}\label{sect:lemma-finish} 

As in the proof of \cite[Theorem 3.2]{DN-2000}, it is sufficient to prove the following result:

\begin{lemma}\label{lem:main-BA}
	Suppose that a set $A$ is super $pr$-sparse via the function $f$. Then for every $\Sigma_{2}^{0}$-ideal $I$ of the algebra $\mathcal{B}(\mathbf{a})$, there exists a degree $\mathbf{c}_{I}\leq \mathbf{a}$ such that
	$$
		(\forall x\in \mathcal{B}(\mathbf{a}))[\mathbf{x}\in I\Leftrightarrow \mathbf{x}\leq \mathbf{c}_{I}].
	$$
\end{lemma}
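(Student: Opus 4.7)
The plan is to construct $\mathbf{c}_I$ as $\deg^{pr}_m(C)$ for a computable set $C = g^{-1}(A)$, where $g\colon \omega \to \omega$ is a primitive recursive function that encodes the $\Sigma^0_2$-data of $I$. This automatically gives $C \leq^{pr}_m A$ via $g$, so $\mathbf{c}_I \leq \mathbf{a}$.

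First, using the isomorphism $\mathcal{B}(\mathbf{a}) \cong \mathcal{C}$ from Lemma~\ref{lem:BA-01}, I would lift $I$ to the $\Sigma^0_2$ index set $J = \{e \in \omega : \deg^{pr}_m(A \cap X_e) \in I\}$, and fix a computable predicate $R$ with $e \in J \Leftrightarrow \exists s\, \forall t \geq s\, R(e,t)$. Then partition $\omega$ into primitive recursive columns $Y_{\langle e, s\rangle}$ indexed by pairs $\langle e, s \rangle$, and define $g$ on the element at position $k$ of column $Y_{\langle e, s\rangle}$ to return $k$ when $k \in X_e$ and the bounded primitive recursive check $\forall t \in [s, |z|]\, R(e,t)$ succeeds, and to return a fixed $d \notin A$ otherwise. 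Setting $C = g^{-1}(A)$, the intersection $C \cap Y_{\langle e, s\rangle}$ then codes $A \cap X_e$ (up to a finite set) whenever $s$ is a witness for $e \in J$, and is finite otherwise.

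The positive direction is straightforward: if $e \in J$ with witness $s_0$, then the primitive recursive map sending $k$ to the $k$-th element of $Y_{\langle e, s_0\rangle}$, suitably patched on a finite initial segment using Proposition~\ref{claim:001}, witnesses $A \cap X_e \leq^{pr}_m C$, so $\deg^{pr}_m(A \cap X_e) \leq \mathbf{c}_I$.

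The main obstacle is the negative direction. Suppose $e_0 \notin J$, $A \cap X_{e_0}$ is non-primitive recursive, and $p$ is a primitive recursive reduction $A \cap X_{e_0} \leq^{pr}_m C$. Partition $\omega$ primitive recursively via $S_{\langle e',s\rangle} = \{x : p(x) \in Y_{\langle e',s\rangle}\}$. On each $S_{\langle e',s\rangle}$, the restriction of $p$ induces a primitive recursive reduction either into $A \cap X_{e'}$ (when $s$ witnesses $e' \in J$) or into an essentially primitive recursive set (otherwise). Invoking the super $pr$-sparseness of $A$---in particular property~(d) of Definition~\ref{def:super-sparse}, that $f$ dominates every primitive recursive function---together with the primitive recursive growth bound on $p$, I would argue that only finitely many pairs $\langle e', s\rangle$ contribute unboundedly to the non-primitive-recursive part of $A \cap X_{e_0}$. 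This yields a primitive recursive reduction of $A \cap X_{e_0}$ into a finite join of elements of $I$; by downward- and finite-join-closure of $I$, we conclude $\deg^{pr}_m(A \cap X_{e_0}) \in I$, contradicting $e_0 \notin J$.
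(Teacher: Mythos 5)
Your overall architecture matches the paper's: both set $C_I = g^{-1}(A)$ for a primitive recursive $g$, code each $A\cap X_e$ with $e$ in the ideal into a designated region of $\omega$ using a limit approximation to the $\Sigma^0_2$ data, and then try to show that any $A\cap X_{e_0}$ that is $pr$-$m$-reducible to $C_I$ must already reduce to a \emph{finite} join of coded ideal elements. The positive direction of your argument is fine.

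The gap is exactly at the point you flag as ``the main obstacle,'' and it is not a technicality: with a generic primitive recursive column decomposition $Y_{\langle e,s\rangle}$, there is no reason a reduction $p\colon A\cap X_{e_0}\leq^{pr}_m C$ should concentrate on finitely many columns. Each column with a false witness contributes only a finite piece of $C$, but $p$ may scatter its image over infinitely many such columns, and a union of infinitely many finite pieces need not be primitive recursive; property~(d) of Definition~\ref{def:super-sparse} plus ``$p$ has primitive recursive growth'' does not by itself rule this out, because every column contains arbitrarily small elements that $p$ can reach. The paper closes this gap by building the quantitative control \emph{into the layout of the coding region}: a number $n$ may serve as an $R_e$-coding location for $f(m)$ only if $n=\langle e,l\rangle$ lies in $[f(m),f(m+1))$ \emph{and} $n$ exceeds the number of steps needed to compute $r_{e-1}(f(m))$, where $r_{e-1}$ is the time bound for $p_{e-1}$. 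Then, for a reduction $p_j$ from $A\cap X_i$, one has $p_j(f(m))\leq r_j(f(m))$, so $p_j(f(m))$ is numerically too small to be an $R_e$-coding location for any $e>\langle i,j\rangle$; moreover property~(d) forces $p_j(f(m))<f(m+1)$ eventually, pinning the target to a coding location for $f(m)$ itself. This is what yields the finite bound $e\leq\langle i,j\rangle$ and hence the reduction to $\bigoplus_{m\leq\langle i,j\rangle}(A\cap X_{\psi(m)})$. Your proposal needs an analogue of this time-bound condition (or some other mechanism tying the position of the $\langle e,s\rangle$-column near $x$ to the running time of the first $e$ primitive recursive functions on $x$); without it the finiteness claim, and with it the negative direction, is unjustified.
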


Then, indeed, the lattice $\mathcal{I}(\mathcal{B}(\mathbf{a}))$ has the following interpretation (with parameter $\mathbf{a}$) inside the structure $\mathbf{C}^{pr}_m$:
\begin{itemize}
	\item the formula $\varphi_{dom}(x) = (x \leq\mathbf{a})$ defines the domain of the lattice;
	\item the formula
	\[
	\varphi_{\leq}(x,y) = \forall z [ \exists w (z\vee w = \mathbf{a} \,\&\, z\wedge w = \mathbf{0}) \rightarrow (z \leq x \rightarrow z\leq y)]
	\]
	defines the ordering on $\mathcal{I}(\mathcal{B}(\mathbf{a}))$
\end{itemize}
(See more details in \cite{DN-2000}.) Theorem~\ref{theo:Nies} and Proposition~\ref{prop:eff-dense-BA} together imply that the theory $Th(\mathbf{C}^{pr}_m)$ is hereditarily undecidable.

\medskip

\begin{proof}[Proof of Lemma~\ref{lem:main-BA}]
	Recall that $(X_{i})_{i\in\omega}$ is the list of all primitive recursive sets (see Subsection~\ref{subsect:01}). Let $\psi(x)$ be a total $\Delta_{2}^{0}$-computable function such that $\mathrm{range}(\psi) = \{ i : \deg^{pr}_m(A\cap X_i) \in I\}$. Such a function $\psi$ exists, since $I$ is a $\Sigma_{2}^{0}$-ideal. We fix a primitive recursive function $\psi_{1}(x,s)$ such that $\psi(x) = \lim_s \psi_1(x,s)$.
	
	As in \cite[Lemma~3.12]{DN-2000}, we satisfy the following requirements:
	\begin{itemize}
		\item[$R_{e}$:] $A\cap X_{\psi(e)} \leq_{m}^{pr} C_{I}$;
		
		\item[$H_{\langle i,j\rangle}\colon$] if $p_j\colon A\cap X_i \leq^{pr}_m C_I$, then $A\cap X_i \leq^{pr}_m \bigoplus_{m\leq \langle i,j\rangle} (A\cap X_{\psi(m)})$.
	\end{itemize}
	The $R_{e}$-requirements ensure that $\mathrm{deg}_{m}^{pr}(C_{I})$ is an upper bound for the ideal $I$. The $H_{\langle i,j\rangle}$-requirements guarantee that for every degree $\deg_{m}^{pr}(A\cap X_{i})$ below $C_{I}$, there is a finite collection of elements from $I$ computing this degree.
	
	We construct the set $C_{I}$ as follows: we build a primitive recursive function $g(x)$, and we set $C_{I} := g^{-1}(A)$. It is clear that in this case $g\colon C_{I}\leq_{m}^{pr} A$.
	
	\medskip
	
	For $e\in\omega$, we define an auxiliary primitive recursive function $r_{e}$ as in Subsection~\ref{subsect:prelim}. The value $r_{e}(x)$ is equal to the number of steps needed to compute $p_{e}(x)$. Without loss of generality, we may assume that $p_{e}(x)\leq r_{e}(x)$ and $r_{e}(x)\leq r_{e+1}(x)$ for all $e,x\in\omega$.
	
	Let $r_{-1}(x) = 0$. Without loss of generality, we assume that $f(0)=0$ and $0\notin A\cup C_{I}$. Recall that
	\[
	\langle i, j\rangle = 2^{i} \cdot (2j+1).
	\]
	In particular, for a fixed $i\in\omega$, the distance between a pair of `neighbours' of the form $\langle i,\cdot \rangle$ is equal to $\langle i, j+1 \rangle - \langle i,j \rangle = 2^{i+1}$.
	
	Now we describe the construction.
	
	\medskip
	
	A \textit{requirement} $R_{e}$ is satisfied as follows. For each number $w\in\text{range}(f)$, we define the finite set of $R_{e}$-\textit{coding locations} associated with $w$. We say that a number $n$ is an $R_{e}$-coding location for the number $f(m)$ if the following conditions hold:
	\begin{itemize}
		\item[(i)] $n = \langle e,l \rangle$ for some $l \geq e$;
		
		\item[(ii)] $f(m)\leq n < f(m+1)$;
		
		\item[(iii)] $n$ is strictly greater that the number of steps needed to compute $r_{e-1}(f(m))$; in particular, this implies that $n > r_{e-1}(f(m))$.
	\end{itemize}
	Intuitively speaking, the $R_{e}$-coding locations are used to encode the set $A\cap X_{\psi(e)}$ inside our $C_{I}$. In addition, the $R_{e}$-coding locations will help us to build $g\colon C_{I} \leq_{m}^{pr} A$.
	
	Similar to Lemma 3.12 of \cite{DN-2000}, Condition (iii) will be important for satisfying the $H_{\langle i,j\rangle}$-requirements (see below). Informally speaking, to satisfy the $R_{e}$-requirements, it suffices to work only with Conditions~(i) and (ii).
	
	\begin{claim}\label{claim:002-a}
		\begin{enumerate}[(a)]
			\item Checking whether $n$ is an $R_{e}$-coding location for a number $f(m)$ is a primitive recursive procedure (in $e,m,n$).
			\item For sufficiently large $m$, the set of $R_{e}$-coding locations for $f(m)$ is non-empty.
		\end{enumerate}
	\end{claim}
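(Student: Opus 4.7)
The plan is straightforward. For part~(a), clauses~(i) and~(iii) admit direct primitive recursive checks: (i) reduces to verifying that the $2$-adic valuation of $n$ equals $e$ and that $(n/2^{e}-1)/2 \geq e$; (iii), once the value $f(m)$ is known, reduces to computing the step count of the primitive recursive function $r_{e-1}$ on input $f(m)$, which is itself primitive recursive in $e$ and $f(m)$, and comparing it with $n$. The subtle clause is~(ii), since $f$ is only assumed computable with primitive recursive graph. I would invoke Lemma~\ref{lem:Graph}: scan $y = 0, 1, \dots, n$ with the primitive recursive procedure of that lemma to find the unique $y$ (if any) satisfying $y \in \mathrm{range}(f)$ and having preimage $m$. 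If no such $y$ exists then $f(m) > n$ and~(ii) fails; otherwise $y = f(m)$, and one then verifies $n < f(m+1)$ by scanning $y' = f(m)+1, \dots, n$ and checking that no $y'$ lies in $\mathrm{range}(f)$. Both loops are $n$-bounded primitive recursive procedures.

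For part~(b), I would exploit clause~(d) of Definition~\ref{def:super-sparse}. Consecutive numbers of the form $\langle e, l \rangle = 2^{e}(2l+1)$ differ by exactly $2^{e+1}$, so every interval of length greater than $2^{e+1}$ whose left endpoint exceeds $\langle e, e \rangle = 2^{e}(2e+1)$ contains at least one such number with $l \geq e$. Writing $T_{e}(x)$ for the primitive recursive step count needed to compute $r_{e-1}(x)$, set
\[
p(x) := x + T_{e}(x) + 2^{e}(2e+1) + 2^{e+1} + 1.
\]
Then $p$ is primitive recursive, so $p = p_{i}$ for some index $i$, and clause~(d) yields an $n_{i}$ such that $f(m+1) > p(f(m))$ for all $m \geq n_{i}$. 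For such $m$ the interval $\bigl[\max(f(m),\, T_{e}(f(m))+1,\, 2^{e}(2e+1)),\, f(m+1)\bigr)$ has length at least $2^{e+1}$ and left endpoint at least $2^{e}(2e+1)$, hence contains some $n = \langle e, l \rangle$ with $l \geq e$ meeting all three conditions.

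The principal obstacle I anticipate is clause~(ii) of part~(a): because $f$ is accessed only through its primitive recursive graph, every mention of the numerical value $f(m)$ must be realized as an $n$-bounded search via Lemma~\ref{lem:Graph}, and one must take care that the failure branches ($f(m) > n$ and $f(m+1) \leq n$) are detected without resorting to any unbounded search. Once this is internalized, the remainder is routine bookkeeping.
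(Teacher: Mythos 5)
Your part~(b) and your treatment of clauses~(i) and~(ii) in part~(a) are essentially the paper's argument. For~(b) the paper applies Definition~\ref{def:super-sparse}(d) to $q(x)=3\cdot 2^{e+1}+M(x)$, where $M(x)$ is the step count for $r_{e-1}(x)$, and disposes of the lower bound $\langle e,e\rangle$ by assuming $m>\langle e,e\rangle$ instead of folding it into the function; both variants work, and note that here $e$ is \emph{fixed}, so using the primitive recursive step-count function $T_e$ of the single function $r_{e-1}$ is legitimate. For clause~(ii) the paper, like you, invokes Lemma~\ref{lem:Graph} (it finds the largest $v\leq n$ in $\rng(f)$ together with its preimage $k$ and checks $k=m$, which is the same $n$-bounded search).

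The genuine gap is in clause~(iii) of part~(a) --- which is the delicate clause, not~(ii). You assert that the step count of $r_{e-1}$ on input $f(m)$ is ``primitive recursive in $e$ and $f(m)$''. It is not: the list $(p_e)_e$, hence $(r_e)_e$, is effective but not primitively recursive, and no primitive recursive $F(e,x)$ can satisfy $F(e,x)\geq p_{e-1}(x)$ for all $e\geq 1$ and all $x$ (set $G(x)=F(x+1,x)$, pick $i$ with $p_i=G+1$, and evaluate at $x=i$). The step count for $r_{e-1}(x)$ majorizes $r_{e-1}(x)\geq p_{e-1}(x)$; and even apart from that, computing it uniformly would let you compute $r_{e-1}(x)$ itself uniformly via the stage approximation, which is impossible for the same reason. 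Uniformity in $e$ cannot be waived here, because the function $g$ defined later extracts $e'$ from its input $x$ and must perform exactly this check inside a single primitive recursive procedure. The repair is the clocked simulation the paper uses: run the computation of $r_{e-1}(f(m))$ for exactly $n-1$ steps using the primitive recursive stage approximation $\varphi_{\cdot,s}(\cdot)$ from Subsection~\ref{subsect:prelim}, and declare~(iii) satisfied if and only if the computation has converged within that budget. This is primitive recursive in $(e,m,n)$ precisely because the time bound is part of the input. With that substitution your proof of~(a) goes through.
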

	\begin{proof}
		(a) One can primitively recursively check Condition~(i). By Lemma~\ref{lem:Graph}, one can primitively recursively find the largest number $v$ such that $v \leq n$ and $v\in \mathrm{range}(f)$. Moreover, we can promptly find the number $k$ such that $f(k) = v$. Condition~(ii) holds if and only if $k=m$. 
		
		After that, to check~(iii), we take precisely $(n-1)$ steps in the computation process for $r_{e-1}(v)$. The number $n$ is an appropriate $R_e$-coding location if and only if the value $r_{e-1}(v)$ has been computed in at most $(n-1)$ steps.
		
		(b)\ We may assume that $m > \langle e,e\rangle$. We define a primitive recursive function $q$ as follows: $q(x)$ equals to $3\cdot 2^{e+1} + M(x)$, where $M(x)$ is equal to the number of steps needed to compute $r_{e-1}(x)$.
		
		By item (d) of Definition~\ref{def:super-sparse}, for a sufficiently large $m$, we always have $f(m+1) > q(f(m))$. Then the interval $\left[M(f(m)) +1, f(m+1)\right)$ (whose length is $> 2^{e+1} + 1$) will definitely have an appropriate $R_e$-coding location of the form $\langle e,l\rangle$.
	\end{proof}
	
	\medskip
	
	Recall that $0\notin A$. We define a function $g(x)$ (and the corresponding set $C_{I}$) as follows.
	\begin{enumerate}
		\item For a given $x$, we find the greatest $m$ such that $f(m)\leq x$. By Lemma~\ref{lem:Graph}, this procedure is primitive recursive. We also find $e^{\prime}$ and $l$ such that $n = \langle e^{\prime}, l \rangle$.
		
		\item If $x$ is not an $R_{e^{\prime}}$-coding location for the number $f(m)$, then we set $g(x) = 0$ and $x\notin C_{I}$.
		
		\item Let $x$ be an $R_{e^{\prime}}$-coding location for $f(m)$. We compute the value $u = \psi_{1}(e^{\prime},x)$. We proceed with $x$ computational steps for the value $\chi_{X_{u}}(f(m))$.
		
		If the value is computed in $x$ steps and $f(m) \in X_u$, then we set $g(x) = f(m)$ and $\chi_{C_I}(x) = \chi_A(f(m ))$.
		Otherwise, we define $g(x) = 0$ and $x\not\in C_I$.
	\end{enumerate}
	Claim~\ref{claim:002-a} implies that the function $g(x)$ is primitive recursive.
	
	\medskip
	
	We show that the choice of our $R_e$-coding locations allows us to `encode' $A\cap X_{\psi(e)}$ inside the constructed $C_I$.
	
	\begin{claim}\label{claim:003-a}
		There is a primitive recursive function $h(y)$ with the following property:
		\[
		\exists y_0 (\forall y \geq y_0)[ y\in A\cap X_{\psi(e)}\ \leftrightarrow\ h(y) \in C_I].
		\]
		In particular, this implies $A\cap X_{\psi(e)}\leq_{m}^{pr} C_{I}$.
	\end{claim}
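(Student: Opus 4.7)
The plan is to construct $h$ by picking, for each $y\in\mathrm{range}(f)$, a specific $R_e$-coding location of $y$ on which the encoding scheme from the definition of $g$ has already ``settled down'' and reflects the bit $\chi_{A\cap X_{\psi(e)}}(y)$. Concretely, set $u=\psi(e)$, fix a primitive recursive index $i$ with $p_i=\chi_{X_u}$, and fix $s_0$ with $\psi_1(e,s)=u$ for all $s\geq s_0$ (these are constants depending only on $e$, not on $y$). Define $h(y)$ as follows: using Lemma~\ref{lem:Graph}, decide whether $y\in\mathrm{range}(f)$; if not, set $h(y)=0$; if $y=f(m)$, compute $f(m+1)$ and let $h(y)$ be the \emph{largest} number of the form $\langle e,l\rangle$ with $l\geq e$ and $\langle e,l\rangle<f(m+1)$, or $0$ if no such number exists. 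The search is bounded by $f(m+1)$, so $h$ is primitive recursive.

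For the eventual correctness, the case $y\notin\mathrm{range}(f)$ is trivial: $y\notin A$ and $h(y)=0\notin C_I$, so both sides of the equivalence are false. For $y=f(m)$ with $m$ large, let $x=h(f(m))$; consecutive numbers of the form $\langle e,l\rangle$ differ by $2^{e+1}$, so $x\geq f(m+1)-2^{e+1}$. By condition (d) of Definition~\ref{def:super-sparse} applied to a primitive recursive function majorising $r_i$, the cost of computing $r_{e-1}$, and the additive constant $2^{e+1}$, one obtains for $m$ large that $x\geq r_i(f(m))$, $x\geq s_0$, and $x$ is an $R_e$-coding location for $f(m)$. Hence at stage $x$ in the definition of $g$, step (3) applies with $\psi_1(e,x)=u$, and the computation of $\chi_{X_u}(f(m))$ converges within $x$ steps. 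The two sub-branches of step (3) yield $\chi_{C_I}(x)=\chi_A(f(m))$ if $f(m)\in X_u$ and $\chi_{C_I}(x)=0$ if $f(m)\notin X_u$; both coalesce into $\chi_{C_I}(h(y))=\chi_A(f(m))\cdot\chi_{X_u}(f(m))=\chi_{A\cap X_{\psi(e)}}(y)$, which is the desired equivalence.

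The main obstacle is to make a single $x=h(y)$ simultaneously satisfy three asymptotic requirements (stabilisation of $\psi_1(e,\cdot)$, convergence of $\chi_{X_u}$ within $x$ steps, and Condition~(iii) from the definition of an $R_e$-coding location), all while being computable primitive recursively from $y$ alone, without any knowledge of $u$, $i$, or $s_0$. Picking the \emph{largest} rather than the smallest coding candidate below $f(m+1)$ is the crucial trick: super sparseness forces $f(m+1)$ to dominate every primitive recursive function of $f(m)$ eventually, so the largest $\langle e,l\rangle<f(m+1)$ eventually outgrows every fixed primitive recursive bound. Finally, the ``eventually'' reduction is upgraded to a genuine $pr$-$m$-reduction $A\cap X_{\psi(e)}\leq_m^{pr} C_I$ by modifying $h$ on the finite initial segment $\{y<y_0\}$ with a hardcoded lookup table.
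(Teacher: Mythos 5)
There is a genuine gap, and it is fatal to the construction as written: your $h$ is not primitive recursive. Given $y=f(m)$, you propose to ``compute $f(m+1)$'' and then take the largest $\langle e,l\rangle<f(m+1)$. The inner search below $f(m+1)$ is indeed bounded, but obtaining the bound $f(m+1)$ itself from $y$ requires an unbounded search through $\Gamma_f$, and no primitive recursive bound can exist: by item~(d) of Definition~\ref{def:super-sparse}, $f(m+1)$ eventually dominates $p(f(m))$ for \emph{every} primitive recursive $p$, so in particular the map $f(m)\mapsto f(m+1)$ is not majorised by, let alone computed by, any primitive recursive function. This is precisely the tension the claim has to resolve: the coding location must be large enough to satisfy the three asymptotic requirements you list, yet must be produced by a primitive recursive procedure that cannot see how far away $f(m+1)$ is.

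The paper's proof resolves this differently. Instead of reaching up toward $f(m+1)$, it defines a primitive recursive \emph{timing} function $L(y)$ (the number of steps needed to compute $\chi_{X_{\psi(e)}}(y)$ and $r_{e-1}(y)$) and sets $h(y)=\langle e,L(y)\rangle$ when $y\in\mathrm{range}(f)\cap X_{\psi(e)}$. This $x=\langle e,L(y)\rangle$ automatically exceeds the step-counts needed for Condition~(iii) and for the convergence of $\chi_{X_{\psi(e)}}(f(m))$ within $x$ steps in clause~(3) of the definition of $g$; condition~(d) is then invoked only in the \emph{verification}, to guarantee that for large $m$ one has $\langle e,L(f(m))\rangle<f(m+1)$, i.e.\ that this $x$ really lands in the interval $[f(m),f(m+1))$ and is therefore a genuine $R_e$-coding location. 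Your verification of eventual correctness and your final remark about patching $h$ on a finite initial segment are fine, but the definition of $h$ must be reworked along these lines for the claim to go through.
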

	\begin{proof}
		We define an auxiliary primitive recursive function $L(y)$: the value $L(y)$ is equal to the number of steps needed to compute the values $\chi_{X_{\psi(e)}}(y)$ and $r_{e-1}(y)$.
		
		Choose a number $y_{0}$ with the following three properties:
		\begin{itemize}
			\item $\psi_{1}(e,y) = \psi(e)$ for all $y\geq y_{0}$;
			
			\item $y_{0} = f(m_{0})$, and for every $m\geq m_{0}$, the set of $R_{e}$-coding locations for $f(m)$ is non-empty;
			
			\item if $m\geq m_{0}$, then $f(m+1)$ is strictly greater than $\langle e, L(f(m)) \rangle$ (the existence of such $m_{0}$ is guaranteed by item (d) of Definition~\ref{def:super-sparse}).
		\end{itemize}
		
		Recall our assumption that $0\notin C_{I}$. For $y\geq y_{0}$, we define the value $h(y)$ via the following rules:
		\begin{enumerate}
			\item If $y\notin \text{range}(f)$ or $y\notin X_{\psi(e)}$, then it is clear that $y\notin A\cap X_{\psi(e)}$. Set $h(y) = 0$ for such $y$.
			\item If $y\in \text{range}(f)\cap X_{\psi(e)}$, then define $h(y) = \langle e,L(y) \rangle$.
		\end{enumerate}
		
		It is clear that the function $h$ is primitive recursive. Moreover, for Case~(2), our choice of $y_0$ ensures the following. Let $m$ be the number such that $y = f(m) \geq y_0$. In the definition of the value $g(x)$ for the element $x :=\langle e, L(y)\rangle$, the construction of $g$ (described above) ensures  that
		\[
			y = f(m) \in A\ \Leftrightarrow\ x\in C_I.
		\]
		Therefore, we obtain the equivalence $y\in A\cap X_{\psi(e)}$ $\Leftrightarrow$ $h(y) \in C_I$.
	\end{proof}
	
	Claim~\ref{claim:003-a} shows that every $R_{e}$-requirement is met.
	
	\medskip
	
	\textit{$H_{\langle i,j \rangle}$-requirements}. We show that Conditions~(i)-(iii) of the definition of $R_{e}$-coding locations allow us to satisfy the $H_{\langle i,j \rangle}$-requirements.
	
	\begin{claim}
		Every $H_{\langle i,j \rangle}$-requirement is met.
	\end{claim}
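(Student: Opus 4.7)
The plan is to construct, under the hypothesis of $H_{\langle i,j\rangle}$, a primitive recursive function $h$ witnessing $A\cap X_i \leq^{pr}_m B$, where $B:=\bigoplus_{k\leq\langle i,j\rangle}(A\cap X_{\psi(k)})$. Given $y$, I first primitively recursively test whether $y\in X_i\cap\mathrm{range}(f)$ (using Lemma~\ref{lem:Graph}); if not, then $y\notin A\cap X_i$ and I set $h(y)$ to a fixed element outside $B$ (for instance a code of $(0,0)$, using $0\notin A$). Otherwise write $y=f(m')$, compute $x:=p_j(y)$, primitively recursively locate $m$ with $f(m)\leq x<f(m+1)$, and parse $x=\langle e',l\rangle$. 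Since $y\in A\cap X_i\Leftrightarrow x\in C_I$, it suffices to determine $\chi_{C_I}(x)$. By the definition of $g$, $x\in C_I$ only when $x$ is a bona fide $R_{e'}$-coding location for $f(m)$ (Claim~\ref{claim:002-a}), the value $u:=\psi_1(e',x)$ satisfies $f(m)\in X_u$ within $x$ steps, and $f(m)\in A$; the first two conditions are primitive recursively testable, leaving only the bit $\chi_A(f(m))$ to encode.

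Split on the size of $e'$. \textbf{Case B1 ($e'\leq\langle i,j\rangle$).} For $y$ above a fixed finite threshold $Y_0$, the approximation $\psi_1(e'',\cdot)$ has settled to $\psi(e'')$ for every $e''\leq\langle i,j\rangle$; in particular $u=\psi(e')$, and since $f(m)\in X_u$ has been verified, the oracle query $\langle e',f(m)\rangle\in B$ returns $\chi_{A\cap X_{\psi(e')}}(f(m))=\chi_A(f(m))$. I set $h(y):=\langle e',f(m)\rangle$.

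\textbf{Case B2 ($e'>\langle i,j\rangle$).} Here condition~(iii) of the coding locations is the decisive ingredient. Since $e'-1\geq\langle i,j\rangle\geq j$ and $r_k$ is nondecreasing in $k$, condition~(iii) yields $x>r_{e'-1}(f(m))\geq r_j(f(m))$. If $m=m'$ this becomes $r_j(y)<x=p_j(y)\leq r_j(y)$, an outright contradiction; hence $m\neq m'$. For $y$ large enough, super-sparsity~(d) applied to the primitive recursive function $r_j$ forces $f(m'+1)>r_j(f(m'))\geq x$, which implies $m\leq m'$, and thus $m<m'$. Therefore $m+1\leq m'$ and $f(m+1)\leq f(m')=y$. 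By super-sparsity property~(c), $\chi_A(f(m))$ is computable in at most $f(m+1)\leq y$ steps, hence it is primitive recursive in $y$. Consequently $\chi_{C_I}(x)$, and therefore $\chi_{A\cap X_i}(y)$, are primitive recursively computable in $y$ without any oracle access, and $h(y)$ is set to a fixed in-$B$ element when the computed bit is $1$ and a fixed out-of-$B$ element otherwise. The finitely many exceptional small $y$ (where one of the thresholds fails) are absorbed into $h$ via a finite lookup table.

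The main obstacle is the Case~B2 analysis: extracting the contradiction for $m=m'$ from the monotonicity of $r_k$ combined with condition~(iii), securing $m\leq m'$ via super-sparsity~(d), and exploiting the resulting bound $f(m+1)\leq y$ together with super-sparsity~(c) to make $\chi_A(f(m))$ primitive recursive in $y$. The remaining bookkeeping—including the choice of fixed in-$B$ and out-of-$B$ elements, which is vacuous when $B=\emptyset$ since then the hypothesis forces $A\cap X_i=\emptyset$—follows the template of Lemma~3.12 of~\cite{DN-2000}.
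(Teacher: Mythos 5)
Your overall strategy follows the paper's, but your case decomposition (on the size of $e'$ rather than on the relative position of the $f$-blocks containing the input $y$ and its image $x=p_j(y)$) leaves a genuine gap in Case~B1. There you set $h(y):=\langle e',f(m)\rangle$ and justify $u=\psi(e')$ by saying that for $y\geq Y_0$ the approximation $\psi_1(e',\cdot)$ has settled. But the construction of $g$ evaluates $\psi_1$ at the argument $x=p_j(y)$, and nothing forces $x$ to be large when $y$ is: $p_j$ may send arbitrarily large $y\in\mathrm{range}(f)\cap X_i$ to small coding locations, i.e.\ into blocks $[f(m),f(m+1))$ with $m<m'$. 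For such $y$ one may have $u=\psi_1(e',x)\neq\psi(e')$, in which case membership of $x$ in $C_I$ is governed by ``$f(m)\in X_u$ and $f(m)\in A$'' while your target queries $f(m)\in A\cap X_{\psi(e')}$; these can disagree, so $h$ fails to be a reduction. The repair is exactly the paper's primary split: if $x$ lands in a block strictly earlier than that of $y$ (the paper's Case~1), then $f(m+1)\leq f(m')=y$ and property~(c) lets you compute $\chi_A(f(m))$, hence $\chi_{C_I}(x)$, outright in at most $y$ steps, with no appeal to $\psi$ at all; only when $x$ lands in the same block (whence $x\geq f(m)\geq y$, so $\psi_1$ has genuinely settled at $x$) do you route the query to the $e'$-th component, and there the paper shows via condition~(iii) that $e'\leq\langle i,j\rangle$ holds automatically.

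Your Case~B2 is correct and is in effect the contrapositive of the paper's condition-(iii) argument: the paper assumes ``same or later block'' and deduces $e\leq\langle i,j\rangle$, whereas you assume $e'>\langle i,j\rangle$ and deduce ``strictly earlier block,'' then finish by direct computation. Note that this direct-computation endgame is precisely what is missing from your Case~B1 when $m<m'$. A smaller point: your parenthetical that $B=\emptyset$ forces $A\cap X_i=\emptyset$ does not follow from the hypothesis $p_j\colon A\cap X_i\leq^{pr}_m C_I$; one should instead arrange (harmlessly) that some $A\cap X_{\psi(k)}$ with $k\leq\langle i,j\rangle$ is nonempty, as the paper implicitly does when it fixes an element $a_0\in A\cap X_{\psi(0)}$.
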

	\begin{proof}
		Let $p_{j}$ be a $pr$-$m$-reduction from $A\cap X_{i}$ to $C_{I}$. We build a primitive recursive function
		\begin{equation}\label{equ:aux-last}
			h\colon A\cap X_i \leq^{pr}_m \bigoplus_{m\leq \langle i,j\rangle} (A\cap X_{\psi(m)}).
		\end{equation}
		Similar to Claim~\ref{claim:003-a}, it is sufficient to define the value $h(x)$ only for sufficiently large $x$. In particular, we may assume that $\psi_{1}(m,x) = \psi(m)$ for every $m\leq \langle i,j \rangle$. Recall that $0\notin A\cup C_{I}$.
		
		If $x\notin\mathrm{range}(f)$ or $x\notin X_{i}$, then it is clear that $x\notin A\cap X_{i}$, and one can set $h(x) = 0$. Therefore, we assume that $x\in X_i$ and $x = f(m)$ for some (sufficiently large) $m$.
		
		We compute the values $y:= p_j(x)$ and $g(y)$. If $g(y) = 0$ or $g(y) \not\in\mathrm{range}(f)$, then $g(y)\not\in A$ and $y\not\in C_I$ (hence, we define $h(x) :=0$). Thus, we may assume that $g(y) = f(k)$ for some $k>0$ (recall that $f(0) = 0$).
		
		\smallskip
		
		\emph{Case~1.} Suppose that $k<m$. Then by item~(c) of Definition~\ref{def:super-sparse}, the value $\chi_A(g(y)) = \chi_{A}(f(k))$ can be computed in at most $f(m ) = x$ steps. We have
		\[
		x \in A\cap X_i\ \Leftrightarrow\ y =p_j(x)\in C_I\ \Leftrightarrow\ f(k) = g(y) \in A.
		\]
		We promptly compute $\chi_A(g(y))$ in $x$ steps, and we set: $h(x) :=0$ if $g(y) \not\in A$; and $h(x) := a_0$ if $g(y) \in A$. Here $a_0$ is some fixed element from $A\cap X_{\psi(0)}$.
		
		\smallskip
		
		\emph{Case~2.} Suppose that $k\geq m$. Since $g(y)$ is equal to $f(k)$, the number $y$ is an $R_e$-coding location for $f(k)$, for some $e\in\omega$. Hence, $y \geq f(k) \geq f(m) = x$.
		
		By item~(d) of Definition~\ref{def:super-sparse}, we may further assume that we work with sufficiently large $x$ such that $f(l+1) > r_j(f(l))$ for $ l\geq m$. This implies that $f(m+1) > r_j(f(m)) \geq p_j(f(m)) = p_j(x) = y$.
		
		We obtain $f(m) \leq y < f(m+1)$, and hence, $k = m$.
		
		Assume that $e > \langle i,j\rangle$. But then each $R_e$-coding location $z$ for $f(m)$ must satisfy $z > r_{e-1}(f(m)) = r_{e-1}(x) \geq r_{ \langle i,j\rangle}(x) \geq r_j(x) \geq p_j(x) = y$, and the number $y$ cannot be an $R_e$-coding location. We deduce that $e \leq \langle i,j\rangle$.
		
		We know that $x\in A\cap X_i\ \Leftrightarrow\ y = p_j(x)\in C_I$.
		If the value $\chi_{X_{\psi(e)}}(g(y)) = \chi_{X_{\psi(e)}}(f(m))$ can be computed in $\leq y$ steps, then we have
		\[
		y\in C_I \ \Leftrightarrow\ g(y) \in A\cap X_{\psi(e)}, \text{ where } e \leq \langle i,j\rangle.
		\]
		Using this fact, we can define $h(x)$ in a suitable manner (as the number from the $e$-th component of the set $\bigoplus_{m} (A\cap X_{\psi(m)})$ corresponding to $g (y)$). If $\chi_{X_{\psi(e)}}(g(y))$ cannot be computed in $\leq y$ steps, then $y\not\in C_I$ and $h(x) :=0$.
		
		\medskip
		
		A simple analysis of the described construction shows that the desired property~(\ref{equ:aux-last}) is satisfied.
	\end{proof}
	
	Lemma~\ref{lem:main-BA} and Theorem~\ref{theo:undecidable} are proved.
\end{proof}

%\bibliographystyle{alpha}
%\bibliography{preprint}

\end{document}